\newtheorem{theorem}{Theorem}[section]
\newtheorem{lemma}[theorem]{Lemma}
\newtheorem{corollary}[theorem]{Corollary}
\newtheorem{proposition}[theorem]{Proposition}
\theoremstyle{definition}
\newtheorem{definition}[theorem]{Definition}
\newtheorem{remark}{Remark}
\newtheorem{example}{Example}
\begin{document}
	
	\title[Leavitt path algebras in which every Lie ideal is an ideal]{Leavitt path algebras in which every Lie ideal is an ideal and applications}
	\author[Hu\`{y}nh Vi\d{\^{e}}t Kh\'{a}nh]{Hu\`{y}nh Vi\d{\^{e}}t Kh\'{a}nh}
	\address{Department of Mathematics and Informatics, HCMC University of Education, 280 An Duong Vuong Str., Dist. 5, Ho Chi Minh City, Vietnam}
	\email{khanhhv@hcmue.edu.vn}
	\keywords{leavitt path algebra; Lie algebra; locally solvable radical; semisimple Lie algebra.\\ 
	\protect \indent 2020 {\it Mathematics Subject Classification.} 16S88; 17B65; 17B20; 17B30.}
	\maketitle
	\begin{abstract} 
		In this paper, we classify all Leavitt path algebras which have the property that every Lie ideal is an ideal. As an application, we show that Leavitt path algebras with this property provide a class of locally finite, infinite-dimensional Lie algebras whose locally solvable radical is completely determined. This particularly gives us a new class of semisimple Lie algebras over a field of prime characteristic.
	\end{abstract}
	
	\section{Introduction}
		The Leavitt path algebra $L_K(E)$ of a graph $E$ with coefficients taken from a field $K$ was first introduced in \cite{Pa_abrams-pino-05} and attracted the attention of many researchers. Those rings were simultaneously and independently defined in \cite{Pa_ara-moreno-pardo-07},  though it appeared in print two years after \cite{Pa_abrams-pino-05}. Under certain assumptions on the graph $E$ and the field $K$, the Leavitt path algebra $L_K(E)$ provides many examples of well-known algebraic structures. For instance, the classical Leavitt $K$-algebra $L_K(1,n)$ for $n\geq 2$, the full $n \times n$ matrix algebra $\mathbb{M}_n(K)$, and the Toeplitz $K$-algebra $\mathscr{T}_K$ are, respectively, the Leavitt path algebras of the “rose with $n$ petals” graph $R_n$ ($n \geq 2$), the oriented line graph $A_n$ with $n$ vertices, and the Toeplitz graph $E_T$ (see \cite[Section 1.3]{Bo_abrams-ara-molina-LPA}).
	
		Within the past few years, Lie structures arising from $L_K(E)$ have been substantially investigated by many authors. For example, one can obtain the Lie algebra by considering $L_K(E)$ with the same $K$-vector space structure and the Lie bracket given by $[a,b]=ab-ba$, where $ a,b \in L_K(E)$. (Please see Subsection \ref{subsection_2.2} for some basic concepts concerning Lie algebras). In this direction, the paper \cite{Pa_nam-zhang-22} completely identified the graph $E$ and the field $K$ for which $L_K(E)$ is Lie solvable. At the other extreme, necessary and sufficient conditions for the Lie algebra $[L_K(E),L_K(E)]$ to be simple were given in \cite{Pa_abrams-mesyan-12} and \cite{Pa_alahmedi-alsulami-16}, under the assumption that $E$ is row-finite. In a similar line, in \cite{Pa_alahmadi-alsulami-16-2} the simplicity of the Lie algebra of skew symmetric elements of $L_K(E)$ was also studied.
	
		Motivated by \cite{Pa_abrams-mesyan-12}, Z. Mesyan gave in \cite{Pa_mesyan-13} a full description of the elements of $[L_K(E),L_K(E)]$. With such a description in hand, he also determined the field $K$ and the graph $E$ for which $L_K(E)$ is a commutator ring. (Recall that an associative ring $R$ is called a commutator ring if $R$ equals the additive subgroup $[R,R]$ generated by its additive commutators.) It is interesting to know that all commutator Leavitt path algebras have the further property that every Lie ideal is also an ideal. (The term Lie ideal of $L_K(E)$ means a $K$-subspace $U \subseteq L_K(E)$ such that $[U,L_K(E)] \subseteq U$.) At this point, a question is naturally raised: Can we identify the graph $E$ and the field $K$ for which $L_K(E)$ has the property that every Lie ideal is an ideal? This is the main purpose of the current paper. It seems to be very hard to give a complete description of Lie ideals of $L_K(E)$, while the (ring-theoretic) ideals of $L_K(E)$ are completely determined. Thus, giving the answer to the above question means that we provide a class of Leavitt path algebras in which all Lie ideals can be completely identified. As an application, among other interesting results, we also obtain from these Leavitt path algebras a new class of semisimple Lie algebras.
	
		Apart from the Introduction section, the paper is divided into four parts. Section 2 provides some basic definitions and certain results concerning Leavitt path algebras and Lie algebras that we use in the sequel. 
	
		Section 3 serves as the main purpose of the current paper. In Theorem \ref{theorem_3.8}, we give necessary and sufficient conditions on $E$ and $K$ for $L_K(E)$ to have the property that every Lie ideal is an ideal. As it will turn out, to ensure that $L_K(E)$ has such a property, the field $K$ must have the characteristic $p > 0$ and the graph $E$ falls into three kinds of graphs. In other words, there are exactly three classes of Leavitt path algebras having the property that every Lie ideal is an ideal, one of which is the class of commutator Leavitt path algebras studied by Z. Mesyan in \cite{Pa_mesyan-13}. Accordingly, Theorem \ref{theorem_3.8} serves as two purposes:  it first generalizes \cite[Theorem 27]{Pa_mesyan-13} and then permits us to construct a class of Lie algebras in which all Lie ideals can be completely determined.
	
		Finally, in Section 4, we illustrate an application of the three classes of Leavitt path algebras $L_K(E)$ described in Theorem \ref{theorem_3.8}. In order to obtain this, we first show that $L_K(E)$ is locally finite as a $K$-algebra if and only if $E$ is acyclic. We also show that the two conditions ``being local Lie solvable" and ``being Lie solvable" are equivalent for locally finite $K$-algebra $L_K(E)$. In particular, we show that if $L_K(E)$ has the property that every Lie ideal is an ideal, then it is a locally finite $K$-algebra. In consequence, such a $L_K(E)$ with the property that every Lie ideal is an ideal becomes a Lie algebra (under bracket $[a,b]=ab-ba$) which is locally finite, and infinite-dimensional over the field $K$ of characteristic $p > 0$. Finally, we calculate the locally solvable radical of this Lie algebra. The calculation allows us to establish a new class of infinite-dimensional, locally finite, semisimple Lie algebras over a field of prime characteristic.
	
	\section{Preliminaries}
	\subsection{Directed graphs and Leavitt path algebras.}
		Throughout this paper, basic notations and conventions are taken from \cite{Bo_abrams-ara-molina-LPA}. In the following, we outline some concepts and notations that will be used in this paper. A directed graph $E=(E^0, E^1, r, s)$ consists of two sets $E^0$ and $E^1$ together with maps $r, s: E^1\to E^0$. The elements of $E^0$ are called \textit{vertices} and the elements of $E^1$ \textit{edges}. We sometimes use the notations $r_E$ and $s_E$ rather than $r$ and $s$ to emphasize the maps we are indicating are those of $E$. The graph $E$ is \textit{finite} if both $E^0$ and $E^1$ are finite. 
	
		A vertex $v$ is called a \textit{sink} if it emits no edges (i.e., $s^{-1}(v)=\varnothing$), while it is called an \textit{infinite emitter }if it emits infinitely many edges (i.e., $s^{-1}(v)$ is infinite). A vertex $v$ is said to be \textit{regular} if it is neither a sink nor an infinite emitter. The set of all regular vertices of $E$ is denoted by ${\rm Reg}(E)$. A \textit{finite path} $\mu$ of length $\ell(\mu):=n\ge 1$ is a finite sequence of edges $\mu=e_1e_2\dots e_n$ with $r(e_i) =s(e_{i+1})$ for all $1\leq i\leq n-1$. We set $s(\mu):=s(e_1)$ and $r(\mu):=r(e_n)$. The set of all finite paths in $E$ is denoted by ${\rm Path}(E)$. Let $\mu=e_1\dots e_n\in {\rm Path}(E)$. If $v=s(\mu)=r(\mu)$, then we say that $\mu$ is a \textit{closed path based at} $v$. If moreover, $s(e_j)\ne v$ for every $j>1$ then $\mu$ is a \textit{simple closed path based at} $v$. If $\mu$ is  a closed path based at $v$ and $s(e_i)\ne s(e_j)$ for every $i\ne j$, then $\mu$ is called a \textit{cycle based at} $v$. Assume that $\mu=e_1\dots e_n$ is a cycle based at the vertex $v$. Then for each $1\le i\le n$, the path $e_ie_{i+1}\dots e_ne_1\dots e_{i-1}$ is a cycle based at $s(e_i)$. We call the collection of cycles $\{\mu_i\}$ based at $s(e_i)$ the \textit{cycle of} $\mu$. A \textit{cycle $c$ based at $v$} is a set of paths consisting of the cycle of $\mu$ for  $\mu$ some cycle based at a vertex $v$. The \textit{length of a cycle} $c$ is the length of any path in $c$. In particular, a cycle of length $1$ is called a \textit{loop}. A graph $E$ is said to be \textit{acyclic} if it does not have any closed path based at any vertex of $E$, or equivalently if it does not have any cycle based at any vertex of $E$. 
	
		Let $H$ be a subset of $E^0$. We say that $H$ is \textit{hereditary}  if whenever $u\in H$ and there is a path from $u$ to some vertex $v$, then $v\in H$; and $H$ is \textit{saturated} if, for any regular vertex $v$, $r(s^{-1}(v))\subseteq H$ implies $v\in H$. Assume that $H$ is a hereditary and saturated subset of $E^0$. A vertex $w$ is called a \textit{breaking vertex} of $H$ if $w \in E^0 \backslash H$ is an infinite emitter such that $1\leq |s^{-1}(w)\cap r^{-1}(E^0\backslash H)|<\infty$. The set of all breaking vertices of $H$ is denoted by $B_H$. For each $w \in B_H$, we set
		$$
		w^H=w-\sum_{s(e)=w, \;r(e)\notin H}ee^*.
		$$
		
		An \textit{admissible pair} of a graph $E$ is an ordered pair $(H,S)$, where $H$ is hereditary and saturated subset of $E^0$ and $S \subseteq B_H$. 
		\begin{definition}[Leavitt path algebra]\label{definition_1.3}
			Let $E$ be a directed  graph and $K$ a field. We define a set $\left( E^1\right)^*$ consisting of symbols of the form $\{e^*| e\in E^1\}$.   The \textit{Leavitt path algebra of $E$ with coefficients in $K$}, denoted by $L_K(E)$, is the associative $K$-algebra generated by $E^0\cup E^1\cup (E^1)^*$, subject to the following relations:
			\begin{enumerate}[]
				\item[(V)] $vv'=\delta_{v,v'}v$ for all $v,v'\in E^0$,
				\item[(E1)] $s(e)e=er(e)=e$ for all $e\in E^1$,
				\item[(E2)] $r(e)e^*=e^*s(e)=e^*$ for all $e\in E^1$,
				\item[(CK1)] $e^*f=\delta_{e,f}r(e)$ for all $e,f\in E^1$, and
				\item[(CK2)] $v=\sum_{\{e\in E^1|s(e)=v\}}ee^*$ for every $v\in{\rm Reg}(E)$.
			\end{enumerate}
		\end{definition}
	We let $r(e^*)$ denote $s(e)$, and we let $s(e^*)$ denote $r(e)$. If  $\mu=e_1\dots e_n\in {\rm Path}(E)$, then we denote by $\mu^*$ the element $e_n^*\dots e_2^*e_1^*$ of $L_K(E)$. 
	
	For an admissible pair $(H,S)$, we denote by $I(H, S)$ the ideal of $L_K(E)$ generated by the set  $H\cup \{v^H: v\in S\}$. 
	
	Let $G$ be an abelian group which is written additively, and $A$ an algebra over a field $K$. We say that $A$ is \textit{$G$-graded} if there exists a family $\{A_{\sigma}\}_{\sigma\in G}$ of $K$-subspaces of $A$ such that 
	$$
	A=\bigoplus_{\sigma\in G}A_{\sigma},
	\text{ as } K\text{-subspaces, and } A_{\sigma}\cdot A_{\tau}\subseteq A_{\sigma+\tau}, \text{ for each } \sigma,\tau\in G.
	$$
	A subset $X$ of $A$ is said to be graded if for every $x=\sum_{\sigma\in G}x_{\sigma}\in X$, with $x_{\sigma}\in A_{\sigma}$, each $x_{\sigma}\in X$. Let $\mathbb{Z}$, $\mathbb{N}$, and $\mathbb{Z}_+$ denote the set of all integers, natural numbers, and positive integers respectively. It was established in \cite[Corollary 2.1.5]{Bo_abrams-ara-molina-LPA} that the Leavitt path algebra $L_K(E)$ has a natural $\mathbb{Z}$-grading given by the length of the monomials. That is, $L_K(E)=\oplus_{n\in\mathbb{Z}}L_n$, where 
	$$
	L_n={\rm span}_K\{\gamma\lambda^*|\lambda,\gamma\in{\rm Path}(E) \text{ and } \ell(\gamma)-\ell(\lambda)=n\}.
	$$
	Throughout this paper, unless otherwise specified, the grading structure of $L_K(E)$ we use is the natural $\mathbb{Z}$-grading just mentioned above.
	
	\subsection{Lie algebras}\label{subsection_2.2}
		In our paper, we will particularly study a certain Lie structure arising from Leavitt path algebras. Thus, it is reasonable to present here some basic concepts concerning Lie algebras. 
		\begin{definition}[Lie algebra]
			Let $K$ be a field. A \textit{Lie algebra} over $K$ is a $K$-vector space $L$, together with a bilinear map, the \textit{Lie bracket} $[\;,\;]: L\times L\to L$, $(x,y)\mapsto[x,y]$ such that the following axioms are satisfied:
			\begin{enumerate}
				\item $[x,x]=0$,
				\item $[x,[y,z]]+[y,[z,x]]+[z,[x,y]]=0$, \hspace{0.5cm} (Jacobi identity)
			\end{enumerate}
			for every $x,y,z$ in $L$. 
		\end{definition}
	
		A \textit{Lie subalgebra} of $L$ is defined to be a $K$-subspace $L'$ of $L$ such that $[a,b]\in L'$ for all $a,b\in L'$. On the other hand, a subspace $I$ of $L$ is called an ideal of $L$ if $[a,b]\in I$ for all $a\in L$ and $b\in I$. Given two ideals $I$ and $J$ of $L$, their product is the ideal
		$$
		[I,J]={\rm span}_K\{[a,b]\;|\;a\in I, b\in J\}.
		$$
		Let $(L,[\;,\;])$ be a Lie algebra over $K$. We define the \textit{derived series} of $L$ to be the series with terms
		$$
		L^{(0)}=L,\;L^{(1)}=[L,L],\;\dots,\; L^{(n)}=[L^{(n-1)},L^{(n-1)}].
		$$
		The Lie algebra $L$ is said to be \textit{solvable} if for some $m\geq 0$ we have $L^{(m)}=0$. It is clear that if $I$ is an ideal of $L$, then $I$ is also a Lie subalgebra of $L$. Thus, the terminology ``solvability" can be defined similarly for $I$.
	
		Every associative $K$-algebra $A$ gives rise to a Lie algebra by considering the same vector structure and the bracket given by $[a,b]=ab-ba$. The Lie algebra obtained in this way is called \textit{the Lie algebra associated to the associative algebra $A$}, and denoted by $A^-$. The associative algebra $A$ is called \textit{Lie solvable} if $A^-$ is solvable.  Under this convention, every (ring-theoretic) ideal of $A$ is an (Lie) ideal of $A^-$; but the converse does not hold. Thus, to distinguish the (Lie) ideals of $A^-$ and the (ring-theoretic) ideals of $A$, we use the term “Lie ideal of $A$” to mean some (Lie) ideal of the Lie algebra $A^-$, while the term “ideal” is still used to indicate some ring-theoretic ideal of the associative algebra $A$. Recall that a Lie algebra $L$ is called \textit{locally finite} if every finite set of elements of $L$ is contained in a finite-dimensional Lie subalgebra of $L$. We also recall that an (associative) $K$-algebra $A$ is called a \textit{locally finite $K$-algebra} if every finite set of elements of $A$ is contained in a finite-dimensional $K$-subalgebra of $A$. It is clear that if $A$ is locally finite as a $K$-algebra, then $A^-$ is a locally finite Lie algebra over $K$. In this paper, we particularly consider the special case where $A = L_K(E)$.

	\section{Leavitt path algebras in which every Lie ideal is an ideal}
		In this section, we prove the main result of the current paper, which classifies all Leavitt path algebras in which all Lie ideals are (ring-theoretic) ideals. It turns out that there are exactly three distinct classes of such Leavitt path algebras, each built on a specific type of graph (see Theorem \ref{theorem_3.8}). One of these classes is the class of commutator Leavitt path algebras, studied by Z. Mesyan in \cite{Pa_mesyan-13}. To establish the main theorem, we introduce the notion of a \textit{$p$-commutator graph}, where $p$ is a prime number. We fix the following notations. Let $E$ be a graph.  Let $u,v \in E^0$; we denote by $d(u,v)$ the length of the shortest path $p$ satisfying $s(p)=u$ and $r(p)=v$. If there is no such a path $p$, we set $d(u,v) = \infty$. For each $u \in E^0$ and $m \in \mathbb{N}$, we define $D(u,m) = \{v \in E^0 | d(u,v) \leq m\}$.
		\begin{definition}\label{definition_3.1}
			Let $E$ be a graph and $p$ a prime number. We say that $E$ is a \textit{$p$-commutator graph} if $E$ satisfies the following conditions:
			\begin{enumerate}[font=\normalfont]
				\item $E$ is acyclic.
				\item $E^0$ contains only regular vertices.
				\item For each $u\in E^0$, there is an $m\in\mathbb{N}$ such that for all $w\in E^0$ satisfying $d(u,w)=m+1$, the number of paths $q=e_1\dots e_k\in{\rm Path}(E)$ such that $s(q)=u$ and $r(q)=w$, and $s(e_2),\dots,s(e_k)\in D(u,m)$ is a multiple of $p$.
			\end{enumerate}
		\end{definition}
		In \cite{Pa_mesyan-13}, Mesyan has classified all commutator Leavitt path algebras, i.e., the algebras $L_K(E)$ such that $[L_K(E),L_K(E)]=L_K(E)$. For the reader's convenience, we restate Mesyan’s theorem here.
		\begin{theorem}[{\cite[Theorem 24]{Pa_mesyan-13}}]\label{theorem_3.2}
			Let $E$ be a graph and $K$ a field. Then $L_K(E)$ is a commutator ring if and only if ${\rm char}(K)=p>0$ and $E$ is a $p$-commutator graph. 
		\end{theorem}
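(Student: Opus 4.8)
The plan is to study the quotient Lie algebra $L_K(E)/[L_K(E),L_K(E)]$ and to match its structure against Definition~\ref{definition_3.1}. First I would record the elementary reductions that hold modulo $[L_K(E),L_K(E)]$, all immediate from the defining relations: for a basis monomial $\gamma\lambda^{*}$ one has $\gamma\lambda^{*}=[\,s(\gamma)\gamma\lambda^{*},\,s(\lambda)\,]$ when $s(\gamma)\neq s(\lambda)$, and $\gamma\lambda^{*}$ is again a single commutator when $\gamma,\lambda$ diverge; peeling off one edge at a time via $[\,g\beta\beta^{*},g^{*}\,]=g\beta\beta^{*}g^{*}-\beta\beta^{*}$ gives $\gamma\gamma^{*}\equiv r(\gamma)$; a closed path is congruent to each of its cyclic rotations; and (CK2) yields $\overline{v}=\sum_{s(e)=v}\overline{r(e)}$ for every regular vertex $v$. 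The one substantial external ingredient is Mesyan's description of $[L_K(E),L_K(E)]$ in \cite{Pa_mesyan-13} (resting on \cite{Pa_abrams-mesyan-12}), which makes these reductions \emph{exhaustive}: if $E$ has a cycle the quotient is nonzero (the cycle yields the free, cyclic part of the quotient), so being a commutator ring already forces condition~(1); and if $E$ is acyclic there is a $K$-linear isomorphism, with $K^{(E^{0})}$ the free $K$-module on the vertex set,
\[
L_K(E)/[L_K(E),L_K(E)]\;\cong\;K^{(E^{0})}\big/\big\langle\,R_{v}:v\in\mathrm{Reg}(E)\,\big\rangle,\qquad R_{v}:=v-\sum_{s(e)=v}r(e).
\]

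The combinatorial engine is a \emph{first-excursion identity}: for any vertex $u$ and any $m\in\mathbb{N}$, once $E$ is acyclic with all vertices regular (so the ball $D(u,m)$ is finite), iterating $\overline{v}=\sum_{s(e)=v}\overline{r(e)}$ only at vertices lying in $D(u,m)$ terminates — it only ever touches the finite acyclic poset $D(u,m)$ and each step moves strictly upward — and produces $\overline{u}=\sum_{w:\,d(u,w)=m+1}N_{w}^{(m)}\,\overline{w}$ in the quotient, where $N_{w}^{(m)}$ is exactly the number of paths from $u$ to $w$ whose internal vertices lie in $D(u,m)$, i.e.\ the quantity in Definition~\ref{definition_3.1}(3); in fact $u-\sum_{w}N_{w}^{(m)}w$ already lies in $\langle R_{v}:v\in D(u,m)\rangle$ over $\mathbb{Z}$, and $N_{w}^{(m)}\geq 1$ whenever $d(u,w)=m+1$ because a geodesic from $u$ to $w$ is such a path. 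Granting this, sufficiency is short: if $\mathrm{char}(K)=p>0$ and $E$ is a $p$-commutator graph, then $E$ is acyclic with all vertices regular, the quotient is spanned by the classes $\overline{u}$, and for each $u$ the $m$ provided by Definition~\ref{definition_3.1}(3) makes every $N_{w}^{(m)}$ a multiple of $p$, hence zero in $K$, so $\overline{u}=0$; thus $L_K(E)=[L_K(E),L_K(E)]$.

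For necessity, assume $L_K(E)=[L_K(E),L_K(E)]$, so $E$ is acyclic and the cokernel above vanishes. I would first prove a \emph{minimality lemma}: if $u=\sum_{v\in S}\lambda_{v}R_{v}$ with $S$ finite and each $v\in S$ regular, then comparing the coefficient of $v_{0}$ for $v_{0}$ minimal in $S$ under path-reachability (a genuine partial order since $E$ is acyclic) gives $\lambda_{v_{0}}=\delta_{v_{0},u}$; hence $u$ is the unique minimal element of $S$, $\lambda_{u}=1$, and every element of $S$ is reachable from $u$. Applying this to a hypothetical non-regular vertex $u$ forces $u\in S\subseteq\mathrm{Reg}(E)$, a contradiction, so every vertex is regular (condition~(2)). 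Now fix any $u$; from $\overline{u}=0$ write $u=\sum_{v\in S}\lambda_{v}R_{v}$ and choose $m$ with $S\subseteq D(u,m)$, possible since $S$ is finite and $d(u,v)<\infty$ on $S$ by the lemma. Subtracting the first-excursion identity yields $\sum_{w:\,d(u,w)=m+1}N_{w}^{(m)}w\in\langle R_{v}:v\in D(u,m)\rangle_{K}$, say equal to $\sum_{v\in D(u,m)}d_{v}R_{v}$. Comparing coefficients at vertices of $D(u,m)$ (where the left side vanishes) gives, as row vectors, $d=dA$ with $A$ the adjacency matrix of the graph induced on the \emph{finite acyclic} set $D(u,m)$; such an $A$ is nilpotent, so $d=0$, and then comparing coefficients at any $w$ with $d(u,w)=m+1$ forces $N_{w}^{(m)}=0$ in $K$. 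Since each $N_{w}^{(m)}$ is a \emph{positive} integer, this is impossible unless $\mathrm{char}(K)=p>0$, in which case $p\mid N_{w}^{(m)}$ for all such $w$ — precisely condition~(3) for $u$. As $u$ was arbitrary, $E$ is a $p$-commutator graph.

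I expect the main obstacle to be the structural input itself: establishing that the elementary commutator reductions are exhaustive — equivalently, that $L_K(E)/[L_K(E),L_K(E)]$ is exactly the cokernel of $I-A_E^{t}$ together with the cyclic free part — is the delicate point, and it is where I would rely on \cite{Pa_mesyan-13,Pa_abrams-mesyan-12}. Once that is in hand the rest is bookkeeping: the minimality lemma, and the pleasant coincidence that nilpotency of the adjacency matrix on the finite ball $D(u,m)$ simultaneously rules out characteristic zero and delivers the divisibility in condition~(3).
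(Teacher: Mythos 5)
The paper offers no proof of this statement: Theorem~\ref{theorem_3.2} is quoted verbatim from \cite[Theorem 24]{Pa_mesyan-13}, so the only meaningful comparison is with Mesyan's original argument, and your proposal is essentially a correct reconstruction of it --- the same elementary commutator reductions presenting the quotient $L_K(E)/[L_K(E),L_K(E)]$ by the vertex relations $R_v$, and the same ball-counting over $D(u,m)$ that produces exactly the path counts appearing in Definition~\ref{definition_3.1}(3). You have also correctly isolated the one genuinely non-elementary ingredient, namely the exhaustiveness of those reductions (Mesyan's description of which linear combinations of vertices lie in $[L_K(E),L_K(E)]$), and attributed it to \cite{Pa_mesyan-13}; the superstructure you place on it (the minimality lemma, the first-excursion identity, the nilpotency of the adjacency matrix on the finite acyclic ball) checks out.
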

		
		In the same paper, Z. Mesyan also showed that commutator Leavitt path algebras have the further interesting property that every Lie ideal is an ideal. This motivates the natural question of classifying the Leavitt path algebras which have such a property. The answer to this question is provided by Theorem \ref{theorem_3.8}, whose proof is rather lengthy and will be preceded by a few lemmas. 
		\begin{lemma}\label{lemma_3.3}
			Let $K$ be a field, and $A$ a $K$-algebra. If $I$ is an ideal of $A$, then $[I,I]$ is a Lie ideal of $A$. 
		\end{lemma}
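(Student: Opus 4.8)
The plan is to show that $[I,I]$ is closed under the Lie bracket with arbitrary elements of $A$, i.e.\ $[[I,I],A]\subseteq [I,I]$. Since $[I,I]$ is by definition a $K$-subspace of $A$, it suffices to check the containment on spanning elements: a typical generator of $[I,I]$ has the form $[a,b]$ with $a,b\in I$, and we must verify that $[[a,b],c]\in [I,I]$ for every $c\in A$.

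The main tool is the Jacobi identity. Writing it for $a,b,c$ gives
$$
[[a,b],c] = [[a,c],b] + [a,[b,c]].
$$
(Here I am using the form $[[a,b],c]+[[b,c],a]+[[c,a],b]=0$ together with antisymmetry $[x,y]=-[y,x]$ of the bracket.) Now the point is that $I$, being a (ring-theoretic, hence Lie) ideal of $A$, absorbs brackets with elements of $A$: $[a,c]\in I$ and $[b,c]\in I$ since $a,b\in I$ and $c\in A$. Therefore $[[a,c],b]$ is a bracket of an element of $I$ with an element of $I$, so it lies in $[I,I]$, and likewise $[a,[b,c]]\in[I,I]$. Hence $[[a,b],c]\in [I,I]$, and extending $K$-linearly over all spanning generators $[a,b]$ of $[I,I]$ and all $c\in A$ yields $[[I,I],A]\subseteq [I,I]$, as required.

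There is no real obstacle here; the only point requiring a word of care is the reduction to spanning elements, namely that it is enough to check $[[a,b],c]\in[I,I]$ for the generators $[a,b]$ of $[I,I]$ and then invoke bilinearity of the bracket to conclude for an arbitrary element $\sum_i \lambda_i[a_i,b_i]$ of $[I,I]$. One should also note explicitly that any ring-theoretic ideal $I$ of $A$ satisfies $[I,A]\subseteq I$ (indeed $[I,A]\subseteq IA + AI\subseteq I$), which is exactly the input used above and is already recorded in the preliminaries.
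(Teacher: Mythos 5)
Your proof is correct: the Jacobi identity rearrangement $[[a,b],c]=[[a,c],b]+[a,[b,c]]$ together with the observation that a ring-theoretic ideal satisfies $[I,A]\subseteq I$ is exactly the standard argument, and the reduction to spanning generators by bilinearity is handled properly. The paper states this lemma without proof, evidently regarding it as routine, and your argument is precisely the one being taken for granted.
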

		
		\begin{lemma}\label{lemma_3.4}
			Let $K$ be a field, and $G$ an abelian group which is written additively. Let $A$ be a $G$-graded $K$-algebra. Then, as a set, the additive subgroup $[A, A]$ of $A$ is $G$-graded. 
		\end{lemma}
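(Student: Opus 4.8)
The plan is to reduce everything to homogeneous commutators. First I would record the elementary fact that the bracket respects the grading on homogeneous elements: if $a\in A_{\sigma}$ and $b\in A_{\tau}$, then since $A_{\sigma}A_{\tau}\subseteq A_{\sigma+\tau}$ and $A_{\tau}A_{\sigma}\subseteq A_{\sigma+\tau}$, the commutator $[a,b]=ab-ba$ lies in $A_{\sigma+\tau}$; in particular it is a homogeneous element of $[A,A]$.

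Next I would handle an arbitrary commutator $[a,b]$ with $a,b\in A$. Writing $a=\sum_{\sigma\in G}a_{\sigma}$ and $b=\sum_{\tau\in G}b_{\tau}$ with $a_{\sigma}\in A_{\sigma}$, $b_{\tau}\in A_{\tau}$ (only finitely many nonzero, by the definition of a $G$-grading), bilinearity of the bracket gives $[a,b]=\sum_{\sigma,\tau\in G}[a_{\sigma},b_{\tau}]$. Grouping the summands according to the value of $\rho=\sigma+\tau$ and invoking the first step, the component of $[a,b]$ in $A_{\rho}$ is exactly $\sum_{\sigma+\tau=\rho}[a_{\sigma},b_{\tau}]$, which is a finite sum of commutators and hence belongs to $[A,A]$. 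Thus every commutator has all of its homogeneous components in $[A,A]$.

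Finally, an arbitrary element $x\in[A,A]$ is a finite sum $x=\sum_{i}[a_i,b_i]$; here I use that over a field the additive subgroup generated by the commutators coincides with their $K$-linear span, since $\lambda[a,b]=[\lambda a,b]$. As extracting the $A_{\rho}$-component is an additive operation, the $A_{\rho}$-component of $x$ is the sum of the $A_{\rho}$-components of the $[a_i,b_i]$, each of which lies in $[A,A]$ by the previous paragraph; hence $x_{\rho}\in[A,A]$ for every $\rho\in G$. This is precisely the assertion that $[A,A]$, as a set, is $G$-graded.

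This argument is routine and I do not anticipate a genuine obstacle; the only point requiring (trivial) care is that all the sums over $G$ involved are finite — guaranteed by the definition of a $G$-graded algebra — so that the regrouping by total degree and the interchange of summation are legitimate.
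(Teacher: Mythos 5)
Your proof is correct and follows essentially the same route as the paper: expand $a$ and $b$ into homogeneous components, group the commutators $[a_\sigma,b_\tau]$ by total degree $\sigma+\tau$ to see that each homogeneous component of $[a,b]$ is itself a sum of commutators, and then pass to arbitrary elements of $[A,A]$ by additivity of component extraction. The only cosmetic difference is that you invoke bilinearity of the bracket directly, whereas the paper expands $ab$ and $ba$ separately before subtracting; the content is identical.
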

		\begin{proof}
			Take $a,b\in A$ and write $a=\sum_{\sigma\in G}a_{\sigma}, b=\sum_{\tau\in G}b_{\tau}$. Then, we have $ab =\sum_{\omega}c_{\omega}, \text{ where } c_{\omega}=\sum_{\sigma+\tau=\omega} a_{\sigma}b_{\tau}$ and $ba=\sum_{\omega}c'_{\omega}, \text{ where } c'_{\omega}=\sum_{\sigma+\tau=\omega}b_{\tau}a_{\sigma}$. It follows that 
			$$
			[a,b]=ab-ba=\sum_{\omega}(c_{\omega}-c'_{\omega}).
			$$
			This implies that  $c_{\omega}-c'_{\omega}=\sum_{\sigma+\tau=\omega}(a_{\sigma}b_{\tau}-b_{\tau}a_{\sigma}) \in [A,A]$ for each $\omega$. Now, let $x$ be an arbitrary element of $[A, A]$. Then, there exist $a_1, \dots, a_k$ and $b_1,\dots, b_k$ in $A$ such that $x=\sum_{i=1}^{k}[a_i,b_i]$. For each $i\in\{1,\dots,k\}$, by what we have seen, each homogenous component of each $[a_i,b_i]$ belongs to $[A,A]$. It follows that each homogenous component of $x$ is also in $[A,A]$. The lemma is proved.
		\end{proof}
		\begin{lemma}[{\cite[Corollary 1.5.12]{Bo_abrams-ara-molina-LPA}}]\label{lemma_3.5_}
			Let $E$ be a graph and $K$ a field. Put $\mathscr{A}=\{\lambda\nu^*\;|\; \lambda, \nu\in{\rm Path}(E) \text{ and }r(\lambda)=r(\nu)\}$. For each $v\in{\rm Reg}(E) $, let $\{e_1^v,\dots,e_{n_v}^v\}$ be an enumeration of the elements of $s^{-1}(v)$. Then, a $K$-basis of the Leavitt path algebra $L_K(E)$ is given by the family 
			$$\mathscr{B}=\mathscr{A}\backslash\{\lambda e_{n_v}^v(e_{n_v}^v)^*\nu^*: \lambda, \nu\in{\rm Path}(E), r(\lambda)=r(\nu)=v\in {\rm Reg}(E)\}.$$
		\end{lemma}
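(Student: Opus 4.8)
The statement quoted is standard (it is \cite[Corollary 1.5.12]{Bo_abrams-ara-molina-LPA}), so I will only sketch a proof. It splits into two parts: (i) $\mathscr{B}$ spans $L_K(E)$, and (ii) $\mathscr{B}$ is $K$-linearly independent. Part (ii) is the substantive one.

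\emph{Spanning.} First recall that $\mathscr{A}$ spans $L_K(E)$. Any element is a $K$-combination of words in the generators $E^0\cup E^1\cup(E^1)^*$; using (V), (E1), (E2) one normalizes each word, and using (CK1), $e^*f=\delta_{e,f}r(e)$, one repeatedly eliminates every occurrence of a starred generator immediately followed by an unstarred one, so after finitely many steps the word becomes $0$ or a scalar times some $\lambda\nu^*$ with $r(\lambda)=r(\nu)$; hence $\mathscr{A}$ spans. Now apply (CK2): for each regular vertex $v$ with its fixed enumeration $\{e_1^v,\dots,e_{n_v}^v\}$ of $s^{-1}(v)$ we have $e_{n_v}^v(e_{n_v}^v)^*=v-\sum_{i=1}^{n_v-1}e_i^v(e_i^v)^*$, so every forbidden element $\lambda e_{n_v}^v(e_{n_v}^v)^*\nu^*\in\mathscr{A}\setminus\mathscr{B}$ equals $\lambda\nu^*-\sum_{i=1}^{n_v-1}\lambda e_i^v(e_i^v)^*\nu^*$. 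Applying this substitution repeatedly strictly decreases the well-founded measure given by, say, the pair $\bigl(\ell(\lambda)+\ell(\nu),\ \#\{\text{forbidden subwords}\}\bigr)$, so the process terminates inside ${\rm span}_K\mathscr{B}$; hence $\mathscr{B}$ spans.

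\emph{Linear independence.} Pass first to the Cohn path algebra $C_K(E)$, the $K$-algebra with the same generators and relations (V), (E1), (E2), (CK1) but \emph{not} (CK2). One shows $\mathscr{A}$ is a $K$-basis of $C_K(E)$ by exhibiting a faithful representation of $C_K(E)$ on $V=\bigoplus_{p\in{\rm Path}(E)}Kp$: let $v\in E^0$ act as the projection onto the span of the paths with source $v$, let $e\in E^1$ act by $p\mapsto ep$ when $r(e)=s(p)$ and $0$ otherwise, and let $e^*$ act by deleting a leading edge $e$ (so $ep'\mapsto p'$ and $0$ on paths not beginning with $e$). A direct check shows (V), (E1), (E2), (CK1) hold; since $\lambda\nu^*$ sends the basis vector $\nu$ to $\lambda$, and distinct $\lambda\nu^*$ have distinct coordinate matrices, $\mathscr{A}$ is a basis of $C_K(E)$. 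Now $L_K(E)=C_K(E)/N$, where $N$ is the ideal generated by the elements $q_v:=v-\sum_{i=1}^{n_v}e_i^v(e_i^v)^*$, $v\in{\rm Reg}(E)$. Using (CK1) to absorb products one checks that $N$ is spanned by the elements $\lambda q_v\nu^*$ with $r(\lambda)=v=r(\nu)$; written in the basis $\mathscr{A}$, each such $\lambda q_v\nu^*$ has ``leading term'' $-\lambda e_{n_v}^v(e_{n_v}^v)^*\nu^*$ together with terms lying in ${\rm span}_K\mathscr{B}$, and as $(\lambda,v,\nu)$ vary these leading terms run bijectively over $\mathscr{A}\setminus\mathscr{B}$. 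This unitriangularity yields a $K$-space decomposition $C_K(E)=N\oplus{\rm span}_K\mathscr{B}$, so the images of $\mathscr{B}$ form a basis of $L_K(E)=C_K(E)/N$, giving both independence and (again) spanning.

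The main obstacle is the bookkeeping in the second part: proving that the $\lambda q_v\nu^*$ really exhaust $N$ (one must verify that multiplying $q_v$ on either side by an arbitrary monomial and reducing by (CK1) stays within their $K$-span) and that the change-of-basis matrix between $\mathscr{A}$ and $(\mathscr{A}\setminus\mathscr{B})\sqcup\mathscr{B}$ is unitriangular for an appropriate ordering. An essentially equivalent but more mechanical alternative is to apply Bergman's Diamond Lemma directly to the presentation of $L_K(E)$: orient (E1), (E2), (CK1) left to right and (CK2) as $e_{n_v}^v(e_{n_v}^v)^*\mapsto v-\sum_{i<n_v}e_i^v(e_i^v)^*$, fix a compatible monomial order, and check that every overlap ambiguity resolves; the irreducible monomials are then exactly the elements of $\mathscr{B}$. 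Either way the conceptual content is small and the work lies in the ambiguity/triangularity verification.
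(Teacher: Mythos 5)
The paper offers no proof of this lemma at all: it is quoted verbatim from \cite[Corollary~1.5.12]{Bo_abrams-ara-molina-LPA}, so there is nothing internal to compare against, and your sketch follows the same route as that source (realize $\mathscr{A}$ as a basis of the Cohn path algebra via a representation on $\bigoplus_{p\in{\rm Path}(E)}Kp$, then pass to $L_K(E)$ as the quotient by the ideal $N$ generated by the relators $q_v$ and check that $N$ is complementary to ${\rm span}_K\mathscr{B}$). Two small cautions on the sketch: ``distinct $\lambda\nu^*$ have distinct coordinate matrices'' does not by itself give linear independence --- you need the standard triangularity argument (evaluate a dependence relation on a basis vector $\nu$ of \emph{minimal} length among the $\nu_i$ occurring, so that no other $\nu_i$ is a prefix of it) --- and in the expansion $\lambda q_v\nu^*=\lambda\nu^*-\sum_i\lambda e_i^v(e_i^v)^*\nu^*$ the term $\lambda\nu^*$ need not lie in $\mathscr{B}$ (it may itself be a forbidden monomial of smaller total length), so the unitriangularity must be taken with respect to the filtration by $\ell(\lambda)+\ell(\nu)$ rather than literally ``leading term plus $\mathscr{B}$-terms''; both points are repairable and consistent with the bookkeeping you already flag.
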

		\begin{lemma}\label{lemma_3.6}
			Let $E$ be a graph and $K$ a field. If $E$ contains an isolated loop, then $L_K(E)$ contains a Lie ideal which is not an ideal.
		\end{lemma}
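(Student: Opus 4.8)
The plan is to exploit the fact that an isolated loop contributes a central idempotent to $L_K(E)$ lying inside a commutative direct factor isomorphic to the Laurent polynomial algebra $K[t,t^{-1}]$; since every $K$-subspace of a commutative algebra is a Lie ideal, a suitably chosen one-dimensional subspace will be a Lie ideal of $L_K(E)$ that is visibly not a ring ideal.

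Write the isolated loop as $e\in E^{1}$ with $s(e)=r(e)=v$; thus $v$ emits and receives only the edge $e$, i.e.\ $s^{-1}(v)=r^{-1}(v)=\{e\}$. In particular $v$ is regular, so (CK2) gives $ee^{*}=v$, while (CK1) gives $e^{*}e=r(e)=v$. The first step is to verify that $v$ is a central idempotent of $L_K(E)$. Idempotency is immediate from (V). For centrality it suffices to check that $v$ commutes with each generator in $E^{0}\cup E^{1}\cup(E^{1})^{*}$: for $w\in E^{0}$ this is (V); for $e$ and $e^{*}$ it follows from (E1)--(E2) since $s(e)=r(e)=v$; and for $f\in E^{1}\setminus\{e\}$, isolation forces $s(f)\ne v\ne r(f)$, so that $vf=(vs(f))f=0=f(r(f)v)=fv$, and similarly $vf^{*}=f^{*}v=0$.

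The second step is to observe that the corner $vL_K(E)$ is commutative. Since $s^{-1}(v)=r^{-1}(v)=\{e\}$, every path with source $v$ or range $v$ is a power of $e$; hence $vL_K(E)$ is spanned by $\{e^{n}:n\ge 0\}\cup\{(e^{*})^{n}:n\ge 1\}$ (with $e^{0}:=v$), and because $ee^{*}=e^{*}e=v$ these monomials pairwise commute, so $vL_K(E)\cong K[t,t^{-1}]$. Combining this with the centrality of $v$: for any $K$-subspace $W\subseteq vL_K(E)$, any $w\in W$, and any $a\in L_K(E)$, using $w=wv$ and centrality of $v$ we get $[w,a]=w(va)-(va)w=[w,va]$, which vanishes since $va\in vL_K(E)$ and $vL_K(E)$ is commutative. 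Thus every $K$-subspace of $vL_K(E)$ is a Lie ideal of $L_K(E)$.

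Finally, I would take $U=Kv$. By the previous step $U$ is a Lie ideal of $L_K(E)$. It is not a ring ideal: $ve=s(e)e=e$ by (E1), while $e\notin Kv$ because $v$ and $e$ are distinct elements of the basis $\mathscr{B}$ of Lemma \ref{lemma_3.5_} (equivalently, $v\in L_{0}$ and $e\in L_{1}$ are nonzero homogeneous elements of different degrees), hence $K$-linearly independent. Therefore $U\cdot L_K(E)\not\subseteq U$, so $U$ is a Lie ideal of $L_K(E)$ that is not an ideal. The one step demanding care is the centrality of $v$, and this is precisely where the isolation hypothesis is indispensable: a loop attached to the rest of $E$ by an entering or an exiting edge would not produce a central vertex, and in fact $Kv$ would then fail even to be a Lie ideal.
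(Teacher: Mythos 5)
Your proof is correct and follows essentially the same route as the paper: both arguments rest on the observation that an isolated loop contributes a commutative direct factor $L_K(c)\cong K[t,t^{-1}]$ of $L_K(E)$, inside which any $K$-subspace is automatically a Lie ideal of the whole algebra, and then pick a one-dimensional subspace not closed under multiplication. The only cosmetic differences are that you verify the centrality of $v$ by hand rather than invoking the decomposition $L_K(E)=L_K(c)\oplus L_K(F)$, and you take $Kv$ (with $ve=e\notin Kv$) where the paper takes $Kc$ (with $c^2\notin Kc$).
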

		
		\begin{proof}
			It is clear that $E$ may be decomposed as $E= c \sqcup F$, where $F$ is a (possibly empty) subgraph of $E$. This decomposition implies that $L_K(E)=L_K(c)\oplus L_K(F)$. In $L_K(c)$, consider the $K$-subspace $Kc$ which is a Lie ideal as $L_K(c)$ is commutative. However, $Kc$ is not an ideal of $L_K(c)$ as $c^2\notin Kc$. It follows that $Kc$ is a Lie ideal of $L_K(E)$ which is not an ideal of $L_K(E)$.
		\end{proof}

		\begin{lemma}\label{lemma_3.7}
			Let $E$ be a graph and $K$ a field. If $E$ has more than one isolated vertex, then $L_K(E)$ contains a Lie ideal which is not an ideal.
		\end{lemma}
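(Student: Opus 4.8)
The plan is to mimic the direct-sum decomposition used in the proof of Lemma~\ref{lemma_3.6}. Fix two distinct isolated vertices $u,v\in E^0$. Since an isolated vertex forms a connected component of $E$ by itself, $E$ decomposes as a disjoint union $E=\{u\}\sqcup\{v\}\sqcup F$, where $F$ is the (possibly empty) subgraph obtained by deleting $u$ and $v$. As $L_K$ of a one-point graph is isomorphic to $K$, this yields a decomposition $L_K(E)=Ku\oplus Kv\oplus L_K(F)$ into a direct sum of ideals.

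The first step is to note that $u$ (and likewise $v$, hence $u+v$) is a central idempotent of $L_K(E)$: because $u$ neither emits nor receives an edge, the defining relations (V), (E1), (E2) force $u$ to annihilate every generator of $L_K(E)$ other than $u$ itself, so $u$ lies in the center. Consequently the one-dimensional subspace $W=K(u+v)$ satisfies $[W,L_K(E)]=0$, and therefore $W$ is a Lie ideal of $L_K(E)$.

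The second step is to verify that $W$ is not a (ring-theoretic) ideal. Multiplying its generator by $u$ gives $u(u+v)=u^2+uv=u$, using that $u$ is idempotent and orthogonal to $v$. Since $u$ and $v$ are linearly independent in $L_K(E)$, the element $u$ is not a scalar multiple of $u+v$; hence $u\notin W$, so $W$ is not closed under multiplication by elements of $L_K(E)$. This exhibits the desired Lie ideal of $L_K(E)$ that is not an ideal.

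There is essentially no obstacle here. The only point needing a little care is the justification of the decomposition $L_K(E)=Ku\oplus Kv\oplus L_K(F)$ and the centrality of an isolated vertex, both of which are immediate from the defining relations once one observes that the trivial path at $u$ is the only path with source $u$; alternatively, one may invoke, as in the proof of Lemma~\ref{lemma_3.6}, the fact that $L_K$ carries disjoint unions of graphs to direct sums, together with $L_K(\{u\})\cong K$.
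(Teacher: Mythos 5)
Your proposal is correct and follows essentially the same route as the paper: both take the subspace $K(u+v)$ for two distinct isolated vertices $u,v$, observe it is a Lie ideal (the paper asserts this, you justify it via centrality of isolated vertices), and show it fails to be an ideal by multiplying by $u$ and invoking the linear independence of $\{u,v\}$.
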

		\begin{proof}
			Let $u,v$ be two distinct isolated vertices in $E^0$. Then $E$ may be decomposed as $E= u \sqcup v \sqcup F$, where $F$ is a (possibly empty) subgraph of $E$. Then, the $K$-subspace $K(u+v)$ of $L_K(E)$ generated by $u+v$ is a Lie ideal of $L_K(E)$. We claim that $K(u+v)$ is not an ideal of $L_K(E)$. Indeed, if otherwise, we would have $u=(u+v)u\in K(u+v)$. This would imply that there exists $k\in K$ such that $u=k(u+v)$, and so $(k-1)u+kv=0$ for some $k\ne0$. Because the set $\{u,v\}$ is linearly independent over $K$, we have $k-1=k=0$, which is impossible. It follows that $K(u+v)$ is not an ideal of $L_K(E)$.
		\end{proof}
		
		\begin{remark}\label{remark_1}
			Let $E$ be a graph and $K$ a field. It is known that the Leavitt path algebra $L_K(E)$ is commutative if and only if $E$ is a disjoint union of isolated loops and isolated vertices. Thus, in view of Lemmas \ref{lemma_3.6} and \ref{lemma_3.7}, we easily obtain the following:  A commutative Leavitt path algebra $L_K(E)$ has the property that every Lie ideal is an ideal if and only if $E$ consists of precisely one vertex. 
		\end{remark}
		
		Now, we return our attention to non-commutative Leavitt path algebras. The following theorem, which determines the graph $E$ and the field $K$ for which $L_K(E)$ has the property that every Lie ideal is an ideal, is the main result of this section.
		\begin{theorem}\label{theorem_3.8}
			Let $E$ be a graph and $K$ a field. Assume that $L_K(E)$ is non-commutative. Then, $L_K(E)$ has the property that every Lie ideal is an ideal if and only if ${\rm char}(K)=p>0$ and $E$ satisfies one of the following conditions:
			\begin{enumerate}[font=\normalfont]
				\item $E$ is a $p$-commutator graph.
				\item  $E$ is a disjoint union of a $p$-commutator graph $F$ and an isolated vertex $v$. In this case, 
				$$
				L_K(E)=Kv\oplus L_K(F).
				$$
				\item  $E^0=H\sqcup \{v\}$, where $H$ is a non-empty hereditary and saturated subset such that the porcupine graph $P_H$ is a $p$-commutator graph, and $v$ is the unique infinite emitter and a source in $E$. In this case,
				$$
				L_K(E)=Kv+I(H).
				$$
			\end{enumerate}
			In particular, the graph $E$ is an acyclic graph.
		\end{theorem}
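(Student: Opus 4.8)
The plan is to prove both directions by reducing everything to understanding the relationship between Lie ideals and homogeneous ideals, using the $\mathbb{Z}$-grading of $L_K(E)$ and Mesyan's description of $[L_K(E),L_K(E)]$. For the forward direction, I would first use the contrapositive of Lemmas \ref{lemma_3.6} and \ref{lemma_3.7} to dispose of isolated loops and of having two or more isolated vertices; since $L_K(E)$ is non-commutative, Remark \ref{remark_1} then forces any problematic commutative pieces to be small. The heart of the forward direction is to show $\mathrm{char}(K)=p>0$ and that $[L_K(E),L_K(E)]$ behaves well: if $L_K(E)$ has the property that every Lie ideal is an ideal, then in particular $[L_K(E),L_K(E)]$ is an ideal (it is a Lie ideal by Lemma \ref{lemma_3.3} applied with $I=L_K(E)$, or directly), and I would argue that an ideal which is also the commutator subspace forces strong structural constraints — essentially that after removing at most one isolated vertex, the remaining algebra is a commutator ring, so Theorem \ref{theorem_3.2} applies and gives the $p$-commutator graph condition. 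The three cases then correspond to: no isolated vertex (case 1), exactly one isolated vertex split off (case 2), and the degenerate situation where a source infinite emitter $v$ appears so that $v$ is not regular but $L_K(E)=Kv+I(H)$ with the porcupine graph $P_H$ carrying the $p$-commutator structure (case 3).

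For the converse, in each of the three cases I would show directly that every Lie ideal is an ideal. In case 1, this is exactly (the Lie-ideal half of) Mesyan's analysis: $E$ being a $p$-commutator graph makes $L_K(E)$ a commutator ring by Theorem \ref{theorem_3.2}, and one shows that for a commutator ring arising as a Leavitt path algebra every Lie ideal is an ideal — I would reproduce or cite the relevant argument from \cite{Pa_mesyan-13}, which uses the basis from Lemma \ref{lemma_3.5_} together with the grading (Lemma \ref{lemma_3.4}) to reduce a Lie ideal to its homogeneous components and then show each homogeneous component is closed under left and right multiplication by vertices and edges. In case 2, $L_K(E)=Kv\oplus L_K(F)$ is a direct product of a one-dimensional (hence abelian, with every subspace a Lie ideal and an ideal) factor and a case-1 factor; a Lie ideal of a direct product of a commutative algebra and $L_K(F)$ decomposes compatibly, and one checks the cross terms vanish because $[v, L_K(F)]=0$ and $v L_K(F)=0$. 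Case 3 is handled by observing $L_K(E)/I(H)\cong Kv$ and analyzing a Lie ideal $U$ via its image and its intersection with $I(H)$, using that $I(H)$ is itself (graded-isomorphic to a corner of) a Leavitt path algebra on a $p$-commutator graph $P_H$.

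The main obstacle I expect is case 3 and, more generally, pinning down the exact list of graphs in the forward direction: it is easy to see the listed graphs work, but showing these are the \emph{only} ones requires ruling out subtler configurations — infinite emitters that are not sources, sinks, breaking vertices, several non-regular vertices, etc. The key technical tool here is the grading: a Lie ideal need not be graded, but by Lemma \ref{lemma_3.4} the space $[L_K(E),L_K(E)]$ is graded, and I would exploit that any Lie ideal $U$ satisfies $[U,[L_K(E),L_K(E)]]\subseteq U$ with the right-hand input graded, squeezing enough homogeneous elements into $U$ to force $U$ to be an ideal — or, conversely, if $E$ has a "bad" vertex one builds an explicit non-graded or low-degree Lie ideal (as in Lemmas \ref{lemma_3.6}, \ref{lemma_3.7}) that fails to be an ideal. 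The final sentence, that $E$ is acyclic, then follows immediately since all three listed graph types are acyclic (the $p$-commutator graph is acyclic by Definition \ref{definition_3.1}(1), and adjoining an isolated vertex or a source to an acyclic graph keeps it acyclic, while $P_H$ acyclic forces $H$, and hence $E$ in case 3, to be acyclic).
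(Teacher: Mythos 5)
Your skeleton matches the paper's: bound the isolated vertices via Lemma \ref{lemma_3.7}, observe that $[L_K(E),L_K(E)]$ is a graded Lie ideal (Lemma \ref{lemma_3.4}) and hence by hypothesis a graded ideal, and split according to whether that ideal is zero, everything, or proper. But the proposal leaves the genuinely hard part of the forward direction unproved, and in one place asserts something false: it is \emph{not} true that ``after removing at most one isolated vertex, the remaining algebra is a commutator ring'' --- case (3) is precisely the situation where $I:=[L_K(F),L_K(F)]$ is a proper nonzero ideal $I(H,S)$ and $L_K(F)$ is not a commutator ring. To handle that case you must: pass to the quotient graph $F\backslash(H,S)$, note $L_K(F)/I$ is commutative and inherits the Lie-ideal property, and invoke Remark \ref{remark_1} to conclude the quotient graph is a single vertex $v$, whence $B_H=\varnothing$ and $v$ is a source which, being non-regular outside the saturated set $H$, must be an infinite emitter; then identify $I$ with $L_K(P_H)$ via Va\v{s}'s theorem and \emph{iterate} the whole argument on $J=[I,I]$ (a graded Lie ideal by Lemmas \ref{lemma_3.3} and \ref{lemma_3.4}), where the sub-case $0\neq J\neq L_K(P_H)$ must be killed by a separate contradiction (the paper shows it would force $[L_K(E),L_K(E)]\subseteq I(H')$ for a smaller $H'$). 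None of this iteration appears in your proposal, and without it you cannot conclude that $P_H$ is a $p$-commutator graph. A second concrete omission: you never rule out the mixed configuration in which $E$ has both an isolated vertex $w$ and the infinite-emitter source $v$. The listed conditions (2) and (3) are mutually exclusive, and the paper excludes the overlap by exhibiting an explicit Lie ideal $U=K(w+v)+M$ (with $M$ the ideal generated by $r(s^{-1}(v))$) that is not an ideal --- an argument in the spirit of Lemma \ref{lemma_3.7} but requiring the basis of Lemma \ref{lemma_3.5_} and a case analysis of $[v,\alpha\beta^*]$. Your general hope of ``squeezing enough homogeneous elements into $U$'' does not substitute for this construction.

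On the converse, cases (1) and (2) are fine and agree with the paper. For case (3), your plan to analyze a Lie ideal $U$ via its image in $L_K(E)/I(H)\cong Kv$ and its intersection with $I(H)$ leaves the extension problem open: knowing $U\cap I(H)$ is an ideal of $I(H)$ and that the image of $U$ is $0$ or $Kv$ does not immediately make $U$ an ideal of $L_K(E)$; the delicate point is showing that if some $kv+a\in U$ with $a\in I(H)$ and $k\neq 0$, then $v$ itself lies in $U$. The paper handles this by adapting Mesyan's Lemma 26 and Theorem 27 to the subgraph $E_H$ and the porcupine graph, using that $v$ is a source so that any monomial $t_iq_i^*$ with $s(t_i)=v$ is just $v$ itself. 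You should either carry out that adaptation or find another route through the extension step; as written, the converse in case (3) is a sketch, not a proof.
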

		\begin{proof}
			For a proof of the ``only if" part, we assume that every Lie ideal of $L_K(E)$ is an ideal. Let $\Gamma$ be the (possibly empty) set of all isolated vertices in $E$. By Lemma \ref{lemma_3.7}, $\Gamma$ contains at most one vertex $v$. It follows that either $E=F$ or $L_K(E)=L_K(F)\oplus Kv$ and in any case 
			$$
			[L_K(E),L_K(E)]=[L_K(F),L_K(F)].
			$$
			Then $I:=[L_K(F),L_K(F)]$ is a graded (by Lemma \ref{lemma_3.4}) Lie ideal and hence by hypothesis a graded ideal of $L_K(E)$. We divide our situation into three possible cases:
			
			\medskip 
			
			{\noindent \textbf{Case 1:}} $I=0$. Then, it is clear that $L_K(F)$ is commutative, from which it follows that $L_K(E)$ is commutative too, a contradiction. 
			
			\medskip 
			
			{\noindent \textbf{Case 2:}} $I=L_K(F)$. In this case, $L_K(F)$ is a commutator ring and so,  by Theorem \ref{theorem_3.2}, ${\rm char} (K)=p>0$ and $F$ is a $p$-commutator graph. Either (1) or (2) holds in this case. 
			
			\medskip 
			
			{\noindent \textbf{Case 3:}} $0\ne I\ne L_K(F)$. Put $H=E^0\cap I$ and $S=\{v\in B_H\;|\; v^H\in I\}\subseteq B_H$. Then, it is known that $I=I(H,S)$. Since $I \subseteq L_K(F)$, we conclude that $I$ is also a graded ideal of $L_K(F)$ and $H=F^0\cap I$, and $S\subseteq F^0$. In other words, the pair $(H,S)$ is an admissible pair of $F$.	Consider the quotient graph $F\backslash (H,S)$ with 
			\begin{align*}
				&	 (F\backslash (H,S))^0=F^0\backslash H\cup\{v'\in B_H\backslash S\}, \\
				&	(F\backslash (H,S))^1=\{e\in F^1\;|\; r(e)\notin H\} \cup \{e'\;|\;e\in F^1, r(e)\in B_H\backslash S\},
			\end{align*}
			and $r$ and $s$ are extended to $(F\backslash(H,S))^1$ by setting $s(e')=s(e)$ and $r(e')=r(e)'$. As $I$ is a graded ideal of $L_K(F)$, by \cite[Theorem 2.4.15]{Bo_abrams-ara-molina-LPA}, we have  $L_K(F)/I\cong L_K(F\backslash (H,S)),$ which is a commutative ring; indeed $I=[L_K(F), L_K(F)]$. Because $L_K(F\backslash (H,S))$ is a homomorphic image of $L_K(F)$, every Lie ideal of $L_K(F\backslash (H,S))$ is also an ideal. Thus, Remark \ref{remark_1} indicates that $F\backslash (H,S)$ consists precisely of only one vertex, say $v$. Therefore, we have $(F\backslash (H,S))^0=F^0\backslash H=\{v\}$ and $(F\backslash (H,S))^1=\varnothing$; consequently, $B_H=\varnothing$: indeed, if $u\in B_H$ there exists an edge $e$ with $s(e)=u$ and $r(e)\notin H$.
			
			We claim that $v$ is both an infinite emitter and a source in $E$. Since $v\notin H$ and $(F\backslash(H,S))^1=\varnothing$, it is $r_E^{-1}(v)=r_F^{-1}(v)=\varnothing$; moreover since $v\in F^0$ and no vertex in $F^0$ is isolated in $E$, $s^{-1}(v)\ne\varnothing$. Therefore $v$ is a source. If $v$ is a regular vertex of $E$, then, as $H$ is saturated, we would have $v\in H$, a contradiction. It follows that $v$ is an infinite emitter in $E$ with $r(s^{-1}(v))\subseteq H$. The claim is proved.
			
			Let $P_{H}$ be the porcupine graph defined by the admissible pair $(H,\varnothing)$ (see \cite[Definition 3.1]{Pa_vas-22}). By \cite[Theorem 3.3]{Pa_vas-22}, we know that $I$ is graded isomorphic to the Leavitt path algebra $L_K(P_{H})$. In what follows, we shall prove that ${\rm char}(K)=p>0$ and $P_H$ is a $p$-commutator graph. In order to obtain this, let us make the construction of $P_H$ more detailed. Using the notations \cite[Definition 3.1]{Pa_vas-22}, a simple calculation shows that $F_1(H,\varnothing)=\{e\in E^1\;|\;s(e)\in F\backslash H \text{ and } r(e)\in H\}=s^{-1}(v)$ which is an infinite set as $v$ is an infinite emitter. Thus,
			\begin{align*}
				&	(P_H)^0= H\cup \{w^e\;|\; e\in s^{-1}(v)\}, \\
				&	(P_H)^1=\{e\in E^1\;|\; s(e)\in H\}\cup \{f^e\;|\; e\in s^{-1}(v)\}.
			\end{align*}
			The $\mathbf{r}$ and $\mathbf{s}$ maps of $P_H$ are the same as in $E$ for the common edges and $\mathbf{s}(f^e)=w^e$ and $\mathbf{r}(f^e)=r(e)$ for all $e\in F_1(H,\varnothing)$. Because all new vertices $w^e$ in $P_H$ are regular, every infinite emitter of $P_H$ is also an infinite emitter of $E$. Put $J=[L_K(P_H),L_K(P_H)]\cong [I,I]$ which is a graded ideal of $L_K(E)$ by Lemmas \ref{lemma_3.3} and \ref{lemma_3.4}. It follows that $J$ is a graded ideal of $L_K(P_H)$. If $J=0$, then $L_K(P_H)$ is commutative; thus, in view of Lemma \ref{lemma_3.7},  $P_H$ is precisely only one vertex. This contradicts to the fact that $(P_H)^0$ is infinite. Therefore, we may assume that $J\ne0$. We consider two subcases in turn:
			
			\smallskip 
			
			{\noindent \textbf{Case 3.1:}} $J \ne L_K(P_H)$. Put $H_1=J\cap (P_H)^0$. Repeat the arguments as we did for $I$, $L_K(E)$ and $H$, we conclude that there exists a vertex  $v'\in (P_H)^0\backslash H_1$ which is both a source and an infinite emitter in $P_H$ with 
			$$
			\mathbf{r}(\mathbf{s}^{-1}(v'))\subseteq H_1\subseteq (P_H)^0=H\cup \{w^e\;|\; e\in s^{-1}(v)\}.
			$$
			As each new vertex $w^e$ is a source and regular, it follows that $\mathbf{r}(\mathbf{s}^{-1}(v'))\subseteq H$ and $v'\in H\subseteq E^0$. In $E$, the set  $H'=H\backslash \{v'\}$ is hereditary and saturated and $B_{H'}=\varnothing$. Thus, the quotient graph $E\backslash H'=\{v'\}\sqcup \Gamma$, which consists of isolated vertices. This implies that $L_K(E)/I(H')\cong L_K(E\backslash H')$ is commutative; and, consequently, $[L_K(E), L_K(E)]=I(H)\subseteq I(H')$, a contradiction. Therefore, this subcase cannot occur. 
			
			\smallskip 
			
			{\noindent \textbf{Case 3.2:}} $J = L_K(P_H)$. This means that ${\rm char }(K)=p >0$ and  $P_H$ is a $p$-commutator graph. Consequently, the graph $P_H$ contains neither cycles nor infinite emitters. It follows that $E$ is acyclic and $v$ is the unique infinite emitter in $E$. Moreover, the construction of $F$ shows that $L_K(F)=Kv+I(H)$. If $\Gamma=\varnothing$, then $E=F$, and so (3) holds, we are done. To complete the proof of the ``only if" part, we shall prove that the case $|\Gamma|=1$ cannot occur. So, assume to the contrary that $\Gamma=\{w\}$. Then,
			$$
			L_K(E)=Kw\oplus \left( Kv+I(H)\right).
			$$
			Let $M$ be the ideal of $L_K(E)$ generated by $r(s^{-1}(v))$. Put $H_M=E^0\cap M$.  Consider the $K$-subspace $U=K(w+v)+M$ of $L_K(E)$. We claim that $U$ is a Lie ideal of $L_K(E)$. Indeed, for $\alpha, \beta\in{\rm Path}(E)$ with $r(\alpha)=r(\beta)$, and $x:=k(w+v)+a\in U$, where $k\in K$ and $a\in M$, we have
			$$
			[x,\alpha\beta^*]=[kw,\alpha\beta^*]+[kv,\alpha\beta^*]+[a,\alpha\beta^*]=[kv,\alpha\beta^*]+[a,\alpha\beta^*],
			$$
			as $w$ is an isolated vertex in $E$. Because $M$ is an ideal of $L_K(E)$, we conclude that $[a,\alpha\beta^*]\in U$. This assures that $[x,\alpha\beta^*]\in U$ if and only if $[v,\alpha\beta^*]\in U$. A simple calculation shows that
			\begin{align*}
				&	[v,\alpha\beta^*]=v\alpha\beta^*-\alpha\beta^*v =    \begin{cases} 
					\alpha\beta^*                                  & \text{ if } s(\alpha) = v\text{ and } s(\beta)\ne v,  \\
					-\alpha\beta^*                                & \text{ if } s(\alpha)\ne v\text{ and } s(\beta) = v,  \\
					0             											 & \text{ otherwise.}
				\end{cases}
			\end{align*}
			Now, let us consider the above cases in turn. First, we consider the case where $[v,\alpha\beta^*]=\alpha\beta^*$ with $s(\alpha)=v$ and $s(\beta)\ne v$. Since $H_M$ is hereditary, we get that $r(\alpha)\in H_M$, which shows that $[v,\alpha\beta^*]=\alpha\beta^*=\alpha r(\alpha)\beta^*\in M\subseteq U$.
			Similarly, we also see that $[v,\alpha\beta^*]\in U$ in the second case. Finally, in the last case, we have $[v,\alpha\beta^*]=0\in U$. Therefore $[x,\alpha\beta^*]\in U$ in any cases, proving that $U$ is a Lie ideal of $L_K(E)$. The claim is proved. Next, we prove that $U$ is not an ideal of $L_K(E)$. Once again, as $E$ is acyclic, it follows that $M$ is graded and generated by $H_M$. By the construction of $F$, it is clear that $w,v\notin H_M$. For each $u\in{\rm Reg}(E) $, let $\{e_1^u,\dots,e_{n_u}^u\}$ be an enumeration of the elements of $s^{-1}(u)$. In view of \cite[Lemma 2.4.1]{Bo_abrams-ara-molina-LPA} and Lemma \ref{lemma_3.5_}, we conclude that the set
			$$
			\mathcal{B}=\{\lambda\nu^*:r(\lambda)=r(\nu)\in H_M\}\backslash\{\lambda e_{n_u}^v(e_{n_u}^u)^*\nu^*: r(\lambda)=r(\nu)=u\in {\rm Reg}(E)\}
			$$
			is a $K$-basis for $M$. Being a subset of a $K$-basis for $L_K(E)$, the set $\{w, v\}\cup\mathcal{B}$ is linearly independent. Assume by contradiction that $U$ is an ideal of $L_K(E)$. It follows that $v=v(w+v)\in U=K(w+v)+M$. This means there exist $k, k_1,\dots, k_n\in K$ and $\lambda_1\nu_1^*, \dots, \lambda_n\nu_n^*\in \mathcal{B}$ such that 
			$$
			v=k(w+v)+k_1\lambda_1\nu_1^*+\cdots+k_n\lambda_n\nu_n^*,
			$$
			which is impossible as  $\{w, v\}\cup\mathcal{B}$ is linearly independent. Therefore, $U$ cannot be an ideal of $L_K(E)$. This violates the hypothesis that every Lie ideal of $L_K(E)$ is an ideal. This implies that the case $|\Gamma|=1$ cannot occur.
			
			Now, we give the proof of the converse of the theorem. If $E$ satisfies the condition (1), then the converse is exactly \cite[Theorem 27]{Pa_mesyan-13}. Assume that $E$ satisfies the condition (2). Let $U$ be a Lie ideal of $L_K(E)$. Then, there exist $K$-subspaces $U_1$ and $U_2$ of $Kw$ and $L_K(F)$ respectively such that $U=U_1\oplus U_2$. A straightforward checking shows that $U_2$ is a Lie ideal of $L_K(F)$. As $F$ is a $p$-commutator graph, we get that $L_K(F)$ is a commutator ring. It follows from (1) that $U_2$ is an ideal of $L_K(F)$. Moreover, because $Kw$ is commutative, the subspace $U_1$ is certainly an ideal of $Kw$. It follows that $U$ is an ideal of $L_K(E)$.
			
			Now, assume that $E$ satisfies the condition (3). It is clear that $B_H=\varnothing$. Let $P_H$ be the porcupine graph determined by the admissible pair $(H,\varnothing)$. The sets of vertices and edges of the Porcupine graph $P_H$ are as follows.
			\begin{align*}
				&	(P_H)^0= H\cup \{w^e\;|\; e\in s^{-1}(v)\}, \\
				&	(P_H)^1=\{e\in E^1\;|\; s(e)\in H\}\cup \{f^e\;|\; e\in s^{-1}(v)\}.
			\end{align*}
			The maps $\mathbf{r}$ and $\mathbf{s}$ of $P_H$ are the same as in $E$ for the common edges and $\mathbf{s}(f^e)=w^e$ and $\mathbf{r}(f^e)=r(e)$ for all $e\in s^{-1}(v)$. Let $E_H$ be the subgraph of $E$ with $(E_H)^0=H$ and $(E_H)^1=\{e\in E^1\;|\; s(e)\in H\}$. Then, $E_H$ is also a subgraph of $P_H$, and hence, it is also a $p$-commutator graph. Moreover, as $E$ contains no cycles, if $\mu\in {\rm Path}(E)$ such that $s(\mu)\in H$ then $\mu\in{\rm Path}(E_H)$ and $r(\mu)\in H$. This fact will be used regularly without referring again. The proof follows from some modifications of the proof of \cite[Lemma 26 and Theorem 27]{Pa_mesyan-13} and finished in four steps.
			
			\medskip 
			
			{\noindent \textbf{Step 1:}} Let $u,u'\in E^0$, and $U$ be the Lie ideal of $L_K(E)$ generated by $u$. We claim that if there exists $t\in {\rm Path}(E)\backslash E^0$ such that $s(t)=u$ and $r(t)=u'$, then $u'\in U$. Since $r(t)\in H$ and $E_H$ is a $p$-commutator graph, one can repeat the arguments of \cite[Theorem 27(1)]{Pa_mesyan-13}.
			
			\medskip 
			
			{\noindent \textbf{Step 2:}} Let $u\in E^0$, and $U\subseteq L_K(E)$ be the Lie ideal generated by $u$. Then the same arguments as in the proof of the assertion (2) of \cite[Lemma 26]{Pa_mesyan-13} shows that for all $t,q\in{\rm Path}(E)$ such that $s(t)=u$, $s(q)=u$ or $r(t)=r(q)=u$, we have $tq^*\in U$.
		
			\medskip 
			
			{\noindent \textbf{Step 3:}} Let $\mu=\sum_{i=1}^{n}a_it_iq_i^*\in L_K(E)\backslash\{0\}$ be an arbitrary element, where $a_i\in K\backslash\{0\}$ and $t_i, q_i\in{\rm Path}(E)$ with $r(t_i)=r(q_i)$ for all $i\in\{1,\dots,n\}$. Let $U\subseteq L_K(E)$ be the Lie ideal generated by $\mu$.
			Since $v$ is a source, if $s(t_{i_0})=v$ then $\mu=a_{i_0}v+\sum_{i\ne i_0}a_it_iq_i^*$ with $r(t_i)\ne v$ for each $i\ne i_0$. We claim  that $r(t_i)\in U$ for all $i\in\{1,\dots,n\}$. Indeed, if $\mu = av$ for some $a\in K\backslash\{0\}$, then the conclusion clearly holds. So, we may assume that $\mu \ne av$ for any $a\in K$. In this case, we have $r(t_i)\in H= (E_H)^0$. By the same arguments as in the proof of (3) of \cite[Lemma 26]{Pa_mesyan-13} applied for $E_H$, we obtain that $r(t_i)\in U$. Since $r(t_i)\in H$ for each $i\ne i_0$, by Step 2, we get that $t_iq_i^*\in U$, from which it follows that $\sum_{i\ne i_0}t_iq_i^*\in U$. As $\mu\in U$ and $\sum_{i\ne i_0}t_iq_i^*\in U$, we get that $a_{i_0}v\in U$, and hence $v\in U$. Therefore, $r(t_i)\in U$ for all $i\in\{1,\dots,n$\}.
			
			\medskip 
			
			{\noindent \textbf{Step 4:}} We claim that every Lie ideal of $L_K(E)$ is also an ideal. The proof is completely the same as that of \cite[Theorem 27]{Pa_mesyan-13}, so we omit it. Thus, the converse of the theorem is established. 
		\end{proof}
		Let us give some examples of the graphs described in (1), (2) and (3) of Theorem \ref{theorem_3.8}. Examples of graphs satisfying (1) are those given by Z. Mesyan in  \cite[Proposition 31 and Example 32]{Pa_mesyan-13}. For convenience of the reader, we still present them in the next two examples.
		\begin{example}[{\cite[Proposition 31]{Pa_mesyan-13}}]\label{example_1}
			Let $K$ be a field with ${\rm char}(K)=p>0$. For each integer $n\geq 1$, let $E_n$ be the following graph:
			
			\begin{center}
				\begin{tikzpicture}
					\node at (0,0) (A11) {$\bullet$};
					\node at (0.4,0.2) (v11) {$v_{11}$};
					\node at (2,0) (A12) {$\bullet$};
					\node at (2.4,0.2) (v12) {$v_{12}$};
					\node at (4,0) (A13) {$\bullet$};
					\node at (4.4,0.2) (v13) {$v_{13}$};
					\node at (6,0) (A14) {$ \cdots$};
					
					\draw [->] (A11) to node[above] {$(p)$} (A12);
					\draw [->] (A12) to node[above] {$(p)$} (A13);
					\draw [->] (A13) to node[above] {$(p)$} (A14);
					
					\node at (0,-2) (A21) {$\bullet$};
					\node at (0.4,-1.8) (v21) {$v_{21}$};
					\node at (2,-2) (A22) {$\bullet$};
					\node at (2.4,-1.8) (v22) {$v_{22}$};
					\node at (4,-2) (A23) {$\bullet$};
					\node at (4.4,-1.8) (v23) {$v_{23}$};
					\node at (6,-2) (A24) {$ \cdots$};
					
					\draw [->] (A21) to node[above] {$(p)$} (A22);
					\draw [->] (A22) to node[above] {$(p)$} (A23);
					\draw [->] (A23) to node[above] {$(p)$} (A24);
					
					\node at (0,-4) (A31) {$\vdots$};
					\node at (2,-4) (A32) {$\vdots$};
					\node at (4,-4) (A33) {$\vdots$};
					
					\node at (0,-4.7) (An1) {$\bullet$};
					\node at (0.4,-4.5) (vn1) {$v_{n1}$};
					\node at (2,-4.7) (An2) {$\bullet$};
					\node at (2.4,-4.5) (vn2) {$v_{n2}$};
					\node at (4,-4.7) (An3) {$\bullet$};
					\node at (4.4,-4.5) (vn3) {$v_{n3}$};
					\node at (6,-4.7) (An4) {$ \cdots$};
					
					\draw [->] (An1) to node[above] {$(p)$} (An2);
					\draw [->] (An2) to node[above] {$(p)$} (An3);
					\draw [->] (An3) to node[above] {$(p)$} (An4);

					\draw [->] (A11) to node[right] {$(p)$} (A21);
					\draw [->] (A21) to node[right] {$(p)$} (A31);
					
					\draw [->] (A12) to node[right] {$(p)$} (A22);
					\draw [->] (A22) to node[right] {$(p)$} (A32);
					
					\draw [->] (A13) to node[right] {$(p)$} (A23);
					\draw [->] (A23) to node[right] {$(p)$} (A33);
				\end{tikzpicture}
			\end{center}
			The notation $(p)$ in the above graph indicates that there are $p$ edges from a vertex to another.  Then, for each $n\geq1$, $E_n$ is a $p$-commutator graph. For each $k\in\{1,\dots,n\}$, it can be seen that $H_k:=\bigcup_{i=k}^n(\bigcup_{j=1}^{\infty}\{v_{ij}\})$ is a hereditary and saturated subset of $E_n^0$. Therefore, $E^0\supseteq H_1\supseteq H_2\supseteq\dots\supseteq H_n$ are precisely hereditary and saturated subsets of $E^0$. Consequently, $L_K(E_n)$ has exactly $n+1$ ideals, which form a chain. In particular, we have $L_K(E_n)\not\cong L_K(E_m)$ if $n\ne m$.
		\end{example}
		
		\begin{example}[{\cite[Example 32]{Pa_mesyan-13}}]\label{example_2}
			Let $K$ be a field with ${\rm char}(K)=p>0$, and $E_{\infty}$ be the graph pictured below.
			\begin{center}
				\begin{tikzpicture}
					\node at (0,0) (A11) {$\bullet$};
					\node at (0.4,0.2) (v11) {$v_{11}$};
					\node at (2,0) (A12) {$\bullet$};
					\node at (2.4,0.2) (v12) {$v_{12}$};
					\node at (4,0) (A13) {$\bullet$};
					\node at (4.4,0.2) (v13) {$v_{13}$};
					\node at (6,0) (A14) {$ \cdots$};
					
					\draw [->] (A11) to node[above] {$(p)$} (A12);
					\draw [->] (A12) to node[above] {$(p)$} (A13);
					\draw [->] (A13) to node[above] {$(p)$} (A14);
					
					\node at (0,-2) (A21) {$\bullet$};
					\node at (0.4,-1.8) (v21) {$v_{21}$};
					\node at (2,-2) (A22) {$\bullet$};
					\node at (2.4,-1.8) (v22) {$v_{22}$};
					\node at (4,-2) (A23) {$\bullet$};
					\node at (4.4,-1.8) (v23) {$v_{23}$};
					\node at (6,-2) (A24) {$ \cdots$};
					
					\draw [->] (A21) to node[above] {$(p)$} (A22);
					\draw [->] (A22) to node[above] {$(p)$} (A23);
					\draw [->] (A23) to node[above] {$(p)$} (A24);
				
					\node at (0,-4) (A31) {$\vdots$};
					\node at (2,-4) (A32) {$\vdots$};
					\node at (4,-4) (A33) {$\vdots$};
					
					\draw [->] (A11) to node[right] {$(p)$} (A21);
					\draw [->] (A21) to node[right] {$(p)$} (A31);
					
					\draw [->] (A12) to node[right] {$(p)$} (A22);
					\draw [->] (A22) to node[right] {$(p)$} (A32);
					
					\draw [->] (A13) to node[right] {$(p)$} (A23);
					\draw [->] (A23) to node[right] {$(p)$} (A33);
				\end{tikzpicture}
			\end{center}
			Then, $E_{\infty}$ is a $p$-commutator graph, and $L_K(E_{\infty})\not\cong L_K(E_n)$ for any $n$. It can be seen that the hereditary and saturated subsets of $E_{\infty}^0$ are of the form $\{v_{ij}:|\;i\geq k, j\geq l\}$, where $k,l\geq1$. Therefore, $L_K(E_{\infty})$ has infinitely many ideals that do not form a chain.
		\end{example}
		
		\begin{example}
			One can obtain an example of a graph satisfying the assertion (2) of Theorem \ref{theorem_3.8} by adding an isolated vertex to one of the graphs given in Examples \ref{example_1} and \ref{example_2}.
		\end{example}
		Finally, we give an example of a graph satisfying (3).
		\begin{example}\label{exmaple_4}
			Let $K$ be a field with ${\rm char}(K)=p>0$, and $E$ be the following graph:			
			\begin{center}
				\begin{tikzpicture}
					\node at (-1,-1) (E) {$E:$};
					\node at (0,0) (O) {$\bullet$};
					\node at (0,0.3) (v) {$v$};
					\node at (0,-2) (A) {$\bullet$};
					\node at (0,-2.3) (v1) {$v_1$};				
					\node at (2,-2) (B) {$\bullet$};
					\node at (2,-2.3) (v2) {$v_2$};					
					\node at (4,-2) (C) {$\bullet$};
					\node at (4,-2.3) (v3) {$v_3$};						
					\node at (6,-2) (dots) {$\cdots$};
					
					\draw [->] (O) to node[left] {$e_1$} (A);
					\draw [->] (O) to node[left] {$e_2$} (B);
					\draw [->] (O) to node[left] {$e_3$} (C);
					\draw [->,densely dotted] (O) to (dots);
					\draw [->] (A) to node[below] {$(p)$} (B);
					\draw [->] (B) to node[below] {$(p)$} (C);
					\draw [->,densely dotted] (C) to node[below] {$(p)$} (dots);
				\end{tikzpicture}
			\end{center}
			Put $H = \{v_1, v_2, \dots\}$. Then,  $E^0=H\sqcup \{v\}$  and $H$ is a hereditary and saturated subset of $E^0$. The porcupine graph $P_H$ is pictured below.
			\begin{center}
				\begin{tikzpicture}
					\node at (-1.5,-1) (PH) {$P_H:$};
					\node at (0,0) (M) {$\bullet$};
					\node at (0,0.3) (we1) {$w^{e_1}$};
					\node at (2,0) (N) {$\bullet$};
					\node at (2,0.3) (we2) {$w^{e_2}$};
					\node at (4,0) (P) {$\bullet$};
					\node at (4,0.3) (we3) {$w^{e_3}$};
					\node at (6,0) (dots-above) {$\cdots$};
									
					\node at (0,-2) (A) {$\bullet$};
					\node at (0,-2.3) (v1) {$v_1$};				
					\node at (2,-2) (B) {$\bullet$};
					\node at (2,-2.3) (v2) {$v_2$};					
					\node at (4,-2) (C) {$\bullet$};
					\node at (4,-2.3) (v3) {$v_3$};						
					\node at (6,-2) (dots) {$\cdots$};
					
					\draw [->] (M) to node[left] {$f^{e_1}$} (A);
					\draw [->] (N) to node[left] {$f^{e_2}$} (B);
					\draw [->] (P) to node[left] {$f^{e_3}$} (C);
					\draw [->,densely dotted] (dots-above) to (dots);

					\draw [->] (A) to node[below] {$(p)$} (B);
					\draw [->] (B) to node[below] {$(p)$} (C);
					\draw [->,densely dotted] (C) to node[below] {$(p)$} (dots);
				\end{tikzpicture}
			\end{center}
			 One can easily check that $P_H$ is a $p$-commutator graph, and that $E$ satisfies all requirements in (3) of Theorem \ref{theorem_3.8}. It follows that $L_K(E)$ has the property that every Lie ideal is an ideal. In $E$, there are exactly three hereditary and saturated subsets: $\varnothing$, $H$ and $E^0$. Therefore, all Lie ideals in $L_K(E)$ are $0$, $I(H)$ and $L_K(E)$ itself.
		\end{example}
	\section{A class of Lie semisimple Leavitt path algebras}
		The current section gives an application of Theorem \ref{theorem_3.8}. More precisely, we show that the Lie algebras $L_K(E)^-$ of the Leavitt path algebras $L_K(E)$ obtained in Theorem \ref{theorem_3.8} are locally finite, infinite-dimensional Lie algebras, whose certain radicals can be calculated explicitly. Also, we will identify whether these Lie algebras are semisimple. First, we find the condition for $L_K(E)$ to be locally finite as a $K$-algebra. To obtain this, we need to consider a certain subgraph of $E$ constructed by a given set of regular vertices (see \cite[Definition 1.5.16]{Bo_abrams-ara-molina-LPA}).
		\begin{definition}
			Let $E$ be a graph. For a set of regular vertices $X\subseteq {\rm Reg}(E)$, we set $Y={\rm Reg}(E)\backslash X$. Let $Y'=\{v'\;|\; v\in Y\}$ be a disjoint copy of $Y$. For $v\in Y$ and for each edge $e\in r_E^{-1}(v)$, we consider a new symbol $e'$. We define the graph $E(X)$ as follows:
			$$
			E(X)^0=E^0\sqcup Y' \text{ and } E(X)^1=E^1\sqcup \{e'\;|\; r_E(e)\in Y\}.
			$$
			For each $e\in E^1$, we define $r_{E(X)}(e)=r_E(e)$ and $s_{E(X)}(e)=s_E(e)$, and define $s_{E(X)}(e')=s_E(e)$ and $r_{E(X)}(e')=r_E(e)'$.
		\end{definition}
		We also use the notion of a complete subgraph which was introduced in \cite[Definition 1.5.16]{Bo_abrams-ara-molina-LPA}. For convenience, we give here the definition of such a subgraph, and we refer the readers to \cite{Bo_abrams-ara-molina-LPA} for more detail.
		\begin{definition}
			A subgraph $F$ of a graph $E$ is said to be \textit{complete} if for each $v\in F^0$, whenever $s_F^{-1}(v)\ne\varnothing$ and $0<|s_E^{-1}(v)|<\infty$, then $s_F^{-1}(v)=s_E^{-1}(v)$.
		\end{definition}
		The next lemma plays an important role in finding the condition on the graph $E$ for which $L_K(E)$ is locally finite as a $K$-algebra.
		\begin{lemma}\label{lemma_4.3}
			Let $E$ be a graph and $K$ a field. Then, every finite subgraph $F$ of $E$ is always contained in a finite graph, say $\dot{F}$, such that there is an injective $K$-algebra homomorphism $L_K(\dot{F})\to L_K(E)$.
		\end{lemma}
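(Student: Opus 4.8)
The plan is to enlarge $F$ in two steps. One cannot simply take $\dot F=F$, because the identity-on-generators assignment $L_K(F)\to L_K(E)$ need not respect the relation (CK2): if $v\in F^0$ emits an edge in $F$ and $v\in{\rm Reg}(E)$ but $F$ omits an edge of $s_E^{-1}(v)$, then (CK2) at $v$ holds in $L_K(F)$ yet fails in $L_K(E)$; and if $v$ emits an edge in $F$ while $v$ is an infinite emitter of $E$, then (CK2) at $v$ again holds in $L_K(F)$ but fails in $L_K(E)$. The first defect is cured by replacing $F$ with a finite \emph{complete} subgraph $F'$ of $E$, and the second by replacing $F'$ with the graph $\dot F:=F'(X)$ formed from $F'$ and $X$ as in the Definition above, for the set $X$ of vertices at which (CK2) must be kept; here $\dot F$ is in general not a subgraph of $E$, but it is a finite graph, which is all that is required.

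\emph{Step 1: completion.} Build $F'$ from $F$ by adjoining, for each $v\in F^0$ with $s_F^{-1}(v)\neq\varnothing$ and $v\in{\rm Reg}(E)$, the whole finite set $s_E^{-1}(v)$ together with the ranges of those edges. This single pass keeps the graph finite (only finitely many regular-in-$E$ vertices are processed, each emitting finitely many edges of $E$) and it yields a complete subgraph: every edge of $F'$ has its source in $F^0$, so the vertices emitting in $F'$ are exactly the vertices of $F^0$ that emit in $F$; for such a vertex $v$ one has $s_{F'}^{-1}(v)=s_E^{-1}(v)$ when $v\in{\rm Reg}(E)$, while if $v\notin{\rm Reg}(E)$ then $v$, which emits in $E$, is an infinite emitter of $E$ and the completeness condition at $v$ is vacuous. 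Thus $F\subseteq F'\subseteq E$ with $F'$ finite and complete.

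\emph{Step 2: the Cohn graph, the homomorphism, and injectivity.} Set $X={\rm Reg}(F')\cap{\rm Reg}(E)$, so that $s_{F'}^{-1}(v)=s_E^{-1}(v)$ for every $v\in X$ by completeness, and let $\dot F:=F'(X)$; this is a finite graph and $F\subseteq F'\subseteq\dot F$. Write $C^X_K(F')$ for the relative Cohn path algebra of $F'$: the $K$-algebra generated by the vertices, edges and ghost edges of $F'$ subject to the relations (V), (E1), (E2), (CK1), and to (CK2) only at the vertices lying in $X$. By \cite[Theorem~1.5.18]{Bo_abrams-ara-molina-LPA} there is a $\mathbb{Z}$-graded isomorphism $L_K(\dot F)\cong C^X_K(F')$ under which each vertex $v$ of $F'$ corresponds to $v$ and, for $v\in{\rm Reg}(F')\setminus X$, the adjoined sink $v'$ corresponds to the idempotent $v-\sum_{e\in s_{F'}^{-1}(v)}ee^*$. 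Since $F'$ is a subgraph of $E$, the generators of $C^X_K(F')$ lie in $L_K(E)$ and satisfy there every defining relation of $C^X_K(F')$: the relations (V), (E1), (E2), (CK1) hold in $L_K(E)$, and for $v\in X$ the relation $v=\sum_{e\in s_{F'}^{-1}(v)}ee^*$ is precisely (CK2) for $v$ in $L_K(E)$, since $v\in{\rm Reg}(E)$ and $s_{F'}^{-1}(v)=s_E^{-1}(v)$. Hence there is a $\mathbb{Z}$-graded $K$-algebra homomorphism $\psi\colon C^X_K(F')\to L_K(E)$ fixing all generators; put $\Phi\colon L_K(\dot F)\xrightarrow{\ \cong\ }C^X_K(F')\xrightarrow{\ \psi\ }L_K(E)$, a graded $K$-algebra homomorphism. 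To prove $\Phi$ injective it suffices, by the Graded Uniqueness Theorem \cite[Theorem~2.2.15]{Bo_abrams-ara-molina-LPA}, to show that $\Phi(w)\neq 0$ for every vertex $w$ of $\dot F$. If $w$ is a vertex of $F'$ then $\Phi(w)=w\neq 0$ in $L_K(E)$. If $w=v'$ with $v\in{\rm Reg}(F')\setminus X$, then $v$ is an infinite emitter of $E$, so $s_{F'}^{-1}(v)$ is a finite proper subset of $s_E^{-1}(v)$; choosing $f\in s_E^{-1}(v)\setminus s_{F'}^{-1}(v)$ and using (E2) and (CK1) we get $f^*\Phi(v')f=f^*\bigl(v-\sum_{e\in s_{F'}^{-1}(v)}ee^*\bigr)f=r(f)\neq 0$, whence $\Phi(v')\neq 0$. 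Thus $\Phi$ is an injective $K$-algebra homomorphism $L_K(\dot F)\to L_K(E)$, and $\dot F$ is the desired finite graph.

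\emph{The main obstacle.} The one point carrying real content is the nonvanishing of $\Phi$ on the adjoined sinks --- the final computation, together with the identification of $v'$ with the defect idempotent $v-\sum_{e\in s_{F'}^{-1}(v)}ee^*$ --- for it is there that the hypothesis that $v$ is an infinite emitter of $E$ (equivalently $v\notin X$) enters. The rest is the elementary bookkeeping of Step 1 and routine appeals to the relative Cohn path algebra isomorphism and to the Graded Uniqueness Theorem.
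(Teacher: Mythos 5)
Your proof is correct and follows essentially the same route as the paper: complete the finite subgraph $F$ to a finite complete subgraph, then pass to the Cohn-type graph $F'(X)$ with $X={\rm Reg}(F')\cap{\rm Reg}(E)$ and use the isomorphism $L_K(F'(X))\cong C^X_K(F')$ to embed into $L_K(E)$. The only difference is that where the paper simply cites \cite[Theorem 1.6.10]{Bo_abrams-ara-molina-LPA} for the embedding $C^X_K(F')\to L_K(E)$, you reprove that embedding directly via the universal property and the Graded Uniqueness Theorem, which is a correct (if redundant) unpacking of the cited result.
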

		\begin{proof}
			Put $X=F^0\cap {\rm Reg}(E)$. Let $\bar{F}$ be a subgraph of $E$ constructed as follows. The set of vertices and edges of $\bar{F}$ are 
			$$
			(\bar{F})^0=F^0\cup \{r_E(e): e\in s_E^{-1}(X)\}\text{ and }(\bar{F})^1=F^1\cup \{e\in s_E^{-1}(X)\}.
			$$
			In words, $\bar{F}$ is obtained by adding to $F$ all edges in $E^1$,  whose sources are regular vertices of $E^0$ contained in $F^0$, together with their ranges. Since $X$ contains only a finite number of regular vertices, we obtain that both sets $\{r_E(e): e\in s_E^{-1}(X)\}$ and $\{e\in s_E^{-1}(X)\}$ are finite. It follows that $\bar{F}$ is a finite graph. Next, we show that $\bar{F}$ is complete. To do this, assume that $v\in \bar{F}^0$ with $s_{\bar{F}}^{-1}(v)\ne\varnothing$ and $0<|s_E^{-1}(v)|<\infty$. This implies that $v\in X$, and so $s_E^{-1}(v)\subseteq s_E^{-1}(X)\subseteq (\bar{F})^1$. It follows that $s_{\bar{F}}^{-1}(v)=s_E^{-1}(v)$, and hence $\bar{F}$ is complete. Moreover, by \cite[Theorems 1.5.18 and 1.6.10]{Bo_abrams-ara-molina-LPA}, we get an embedding
			$$
			L_K(\bar{F}({\rm Reg}(\bar{F})\cap{\rm Reg}(E)))\cong C_K^{{\rm Reg}(\bar{F})\cap{\rm Reg}(E)}(\bar{F})\to L_K(E).
			$$
			Now, put $\dot F=\bar{F}({\rm Reg}(\bar{F})\cap{\rm Reg}(E))$ which is a finite graph by \cite[Propostion 1.5.21]{Bo_abrams-ara-molina-LPA}. The lemma is proved.
		\end{proof}
		
		The following proposition gives a necessary and sufficient condition for Leavitt path algebras to be locally finite as an algebra. We note in passing that the meaning of the term “locally finite” in the next proposition is different from that used in \cite{Pa_abrams-pino-molina-08}. 
		\begin{proposition}\label{proposition_4.4}
			Let  $E$ be a graph and $K$ a field. Then $L_K(E)$ is a locally finite $K$-algebra if and only if $E$ is acyclic.
		\end{proposition}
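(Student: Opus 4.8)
The plan is to prove both directions by reducing to the behaviour of cycles. For the ``only if'' direction, suppose $L_K(E)$ is locally finite as a $K$-algebra and, arguing by contradiction, suppose $E$ contains a cycle $c$ based at a vertex $v$. I would pass to the Leavitt path subalgebra generated by the (finitely many) vertices and edges occurring in $c$; concretely, consider the finite subgraph $F$ consisting of the cycle $c$ alone. By Lemma~\ref{lemma_4.3} there is a finite graph $\dot F \supseteq F$ together with an injective $K$-algebra homomorphism $L_K(\dot F)\to L_K(E)$, so $L_K(E)$ contains an isomorphic copy of $L_K(\dot F)$. Since $\dot F$ still contains the cycle $c$, it is not acyclic; one then needs the standard fact (see \cite{Bo_abrams-ara-molina-LPA}) that the Leavitt path algebra of a finite graph containing a cycle is infinite-dimensional over $K$ --- indeed if $c$ is a cycle at $v$ then $\{c^n : n\ge 1\}\cup\{(c^*)^n:n\ge1\}$ (or just the powers $v, c, c^2,\dots$ when $c$ has no exit inside $\dot F$, and otherwise the elements $c^n(c^*)^n$ are distinct basis elements) yields an infinite $K$-linearly independent set. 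Thus $L_K(\dot F)$ is infinite-dimensional, hence so is its image, and this image is a finitely generated subalgebra of $L_K(E)$ (generated by the finite vertex/edge set of $\dot F$). This contradicts local finiteness, since a finitely generated $K$-subalgebra of a locally finite algebra is finite-dimensional.

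For the ``if'' direction, assume $E$ is acyclic and let $S\subseteq L_K(E)$ be a finite set; I must exhibit a finite-dimensional $K$-subalgebra containing $S$. Each element of $S$ lies in the span of finitely many monomials $\gamma\lambda^*$, so there is a finite subgraph $F$ of $E$ with $S\subseteq L_K(F)$ (identifying $L_K(F)$ with its canonical image in $L_K(E)$, bearing in mind this map need not be injective, which is harmless here). By Lemma~\ref{lemma_4.3}, $F$ is contained in a finite graph $\dot F$ with an injective homomorphism $\iota\colon L_K(\dot F)\to L_K(E)$; since $F$ is a subgraph of $\dot F$, the image $\iota(L_K(\dot F))$ is a $K$-subalgebra of $L_K(E)$ containing $S$. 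It remains to check that $L_K(\dot F)$ is finite-dimensional. Here is the key point: the construction of $\dot F$ in Lemma~\ref{lemma_4.3} only adds, to the finite graph $F$, edges emitted by regular vertices together with their ranges, and then performs the $\bar F(\cdot)$ construction; since $E$ is acyclic, $\dot F$ is also acyclic (no new cycle can be created by adding edges already present in $E$, and the primed-vertex construction only duplicates range vertices). A finite acyclic graph $\dot F$ with all the induced completeness has a Leavitt path algebra which is finite-dimensional --- by \cite[Corollary 2.6.5 / Proposition 2.7.2]{Bo_abrams-ara-molina-LPA} (or directly: the basis $\mathscr B$ of Lemma~\ref{lemma_3.5_} consists of elements $\lambda\nu^*$ with $\lambda,\nu\in{\rm Path}(\dot F)$, $r(\lambda)=r(\nu)$, and in a finite acyclic graph ${\rm Path}(\dot F)$ is a finite set, so $\mathscr B$ is finite). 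Hence $\iota(L_K(\dot F))$ is a finite-dimensional $K$-subalgebra of $L_K(E)$ containing $S$, and $L_K(E)$ is locally finite.

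The main obstacle I anticipate is not any single hard step but rather bookkeeping around Lemma~\ref{lemma_4.3}: one must be careful that the enlargement $F\rightsquigarrow\dot F$ preserves acyclicity (so that $L_K(\dot F)$ really is finite-dimensional) and, conversely in the other direction, that the injective homomorphism genuinely lands a finitely generated subalgebra containing the chosen finite set. Both points are routine given the explicit description of $\bar F$ and $E(X)$ recalled above, but they are where the argument must be made precise; everything else is a direct appeal to the standard dichotomy ``finite graph acyclic $\iff$ $L_K$ finite-dimensional'' and to Lemma~\ref{lemma_4.3}.
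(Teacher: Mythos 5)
Your proposal is correct and takes essentially the same approach as the paper: the ``if'' direction is the same reduction via Lemma~\ref{lemma_4.3} to the finite acyclic graph $\dot F$, whose Leavitt path algebra is finite-dimensional. For the ``only if'' direction the paper is slightly more direct --- it notes that $\{c^n\}_{n\in\mathbb{N}}$ is already a linearly independent subset of the basis from Lemma~\ref{lemma_3.5_}, lying in the subalgebra generated by $c$, so your detour through $\dot F$ is unnecessary (though harmless).
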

		\begin{proof}
			It is known that $L_K(E)$ is spanned as a vector space over $K$ by the set of monomials $\{\gamma\lambda^*\;|\; \gamma\lambda\in {\rm Path}(E) \text{ such that } r(\gamma)=r(\lambda)\}$. Therefore, to give a proof of the ``if part", it is enough to prove that for any finite number of paths $\mu_i=e_{1i}\dots e_{n_i}\in{\rm Path}(E)$, for $1\leq i\leq k$, there always exists a finite-dimensional subalgebra of $L_K(E)$ containing $\mu_i$ and $\mu_i^*$ for all $1\leq i\leq k$. Let $F$ be the subgraph of $E$ with 
			\begin{align*}
				&	 F^0=\{s(e_{1i}), r(e_{1i}), \dots, r(e_{n_ii}) \text{ for all } 1\leq i\leq k\}, \\
				&	F^1=\{e_{1i},\dots,e_{n_ii} \text{ for all } 1\leq i\leq k \}.
			\end{align*}
			Then $F$ is finite, and so, by Lemma \ref{lemma_4.3}, $F$ is contained in the finite graph $\dot{F}$. Moreover, there is an embedding $L_K(\dot{F})\to L_K(E)$. Since $E$ is acyclic, by \cite[Proposition 1.5.21]{Bo_abrams-ara-molina-LPA}, the graph $\dot{F}$ is acyclic too. In view of \cite[Theorem 2.6.17]{Bo_abrams-ara-molina-LPA}, we conclude that $L_K(\dot{F})$ is finite-dimensional over $K$. Let $A$ be the $K$-subalgebra of $L_K(E)$ generated by $\mu_i$ and $\mu_i^*$. Since $\mu_i,\mu_i^*\in L_K(\dot{F})$ for all $i\in\{1,\dots,k\}$ and $L_K(\dot{F})$ is embedded $L_K(E)$, we get that $A$ is contained in $L_K(\dot{F})$. It follows that $A$ is finite-dimensional over $K$. 
			
			For the converse, assume that $L_K(E)$ is locally finite. If $E$ is not acyclic, then $E$ contain a cycle, say $c$. But then, by Lemma \ref{lemma_3.5_}, we have that $\{c^n\}_{n\in\mathbb{N}}$ is a linearly independent set contained in the subalgebra of $L_K(E)$ generated by $c$, a contradiction. This assures that $E$ is an acyclic graph.
		\end{proof}
		
		\begin{corollary}\label{corollary_4.5}
			Let  $E$ be a graph and $K$ a field. Assume that $L_K(E)$ has the property that every Lie ideal is an ideal. Then $L_K(E)$ is a locally finite $K$-algebra.  In particular, the Lie algebra $L_K(E)^-$ is locally finite as a Lie algebra over $K$.
		\end{corollary}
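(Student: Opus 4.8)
The plan is to assemble the structural classification already obtained with the characterisation of local finiteness in Proposition \ref{proposition_4.4}; the only real content is the observation that the hypothesis forces $E$ to be acyclic. First I would dispose of the degenerate and commutative cases. If $L_K(E)=0$ (that is, $E^0=\varnothing$), the statement is trivial. If $L_K(E)$ is commutative and non-zero, then by Remark \ref{remark_1} the graph $E$ consists of precisely one vertex, which is in particular acyclic. If $L_K(E)$ is non-commutative, then Theorem \ref{theorem_3.8} applies, and its concluding assertion states exactly that $E$ is acyclic. So in every case the hypothesis yields an acyclic graph $E$.

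Once $E$ is known to be acyclic, Proposition \ref{proposition_4.4} gives immediately that $L_K(E)$ is a locally finite $K$-algebra, which is the first conclusion. For the ``in particular" part I would invoke the elementary remark recorded in Subsection \ref{subsection_2.2}: if an associative $K$-algebra $A$ is locally finite as a $K$-algebra, then $A^-$ is a locally finite Lie algebra, since a finite-dimensional $K$-subalgebra of $A$ containing a prescribed finite subset is in particular a finite-dimensional Lie subalgebra of $A^-$ containing that subset. Applying this with $A=L_K(E)$ completes the argument.

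There is essentially no obstacle to overcome here: all the substantive work has been done in Theorem \ref{theorem_3.8} (which supplies acyclicity) and Proposition \ref{proposition_4.4} (which converts acyclicity into local finiteness as an algebra), so the corollary is a short matter of combining these with the routine passage from associative to Lie local finiteness. The one point that needs a word of care is that Theorem \ref{theorem_3.8} is stated only for non-commutative $L_K(E)$, which is why the commutative case must be handled separately through Remark \ref{remark_1}.
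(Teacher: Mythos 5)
Your proposal is correct and follows essentially the same route as the paper: acyclicity of $E$ via Theorem \ref{theorem_3.8}, then Proposition \ref{proposition_4.4}, then the routine passage from associative to Lie local finiteness. Your explicit treatment of the commutative and empty cases (via Remark \ref{remark_1}) is a small point of extra care that the paper's one-line proof glosses over, but it changes nothing of substance.
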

		\begin{proof}
			According to Theorem \ref{theorem_3.8}, it follows that $E$ is acyclic. Thus, the first assertion immediately follows from Proposition \ref{proposition_4.4}. 
		\end{proof}
		Recall that the \textit{solvable radical} of a finite-dimensional Lie algebra is defined to be its largest solvable ideal which is the sum of all solvable ideals. Such a radical may not exist in the infinite-dimensional case.  However, there is an analogous terminology for locally finite Lie algebras. A Lie subalgebra $L'$ in a locally finite Lie algebra $L$ is called \textit{locally solvable} if every finite-dimensional Lie subalgebra of $L'$ is solvable. The \textit{locally solvable radical} of a locally finite Lie algebra $L$ is defined to be the unique maximal locally solvable ideal in $L$. Such a radical is equal to the sum of all locally solvable ideals in $L$. A locally finite Lie algebra is said to be \textit{semisimple} if it has zero locally solvable radical (see \cite[p. 37]{Bo_penkov-hoyt-CLA}).
		
		In what follows, we show that the two conditions “local Lie solvability” and “Lie solvability” are equivalent in the context of locally finite Leavitt path algebras. To obtain this, we need to present some results concerning Lie solvability of $L_K(E)$. Define $\mathbf{C}_J$ to be the \textit{clock graph} pictured as follows:\\
		
		\begin{center}
			\begin{tikzpicture}
				\node at (0,0) (O) {$\bullet$};
				\node at (-0.3,0.2) (u) {$u$};
				\node at (0,1.5) (D) {$\bullet$};
				\node at (1.5,1.5) (M) {$\bullet$};
				\node at (1.5,0) (A) {$\bullet$};
				\node at (-1.5,-1.5) (P) {$\bullet$};
				\node at (-0.8,-0.3) (dots) {$\ddots$};
				\node at (0,-1.5) (C) {$\bullet$};
				\node at (1.5,-1.5) (N) {$\bullet$};
				
				\draw [->] (O) to (A);
				\draw [->] (O) to (M);
				\draw [->] (O) to (D);
				\draw [->] (O) to (N);
				\draw [->,densely dotted] (O) to (C);
				\draw [->,densely dotted] (O) to (P);
			\end{tikzpicture}
		\end{center}
		where $r(s^{-1}(u))=\{v_j|j\in J\}$ is a non-empty set of arbitrary cardinality.
		The following lemma is a consequence of \cite[Theorem 2.3]{Pa_nam-zhang-22}.
		
		\begin{lemma}\label{lemma_4.6}
			Let $E$ be an acyclic graph and $K$ a field. The following assertions hold:
			\begin{enumerate}[font=\normalfont]
				\item If ${\rm char}(K) = 2$, then $L_K(E)$ is Lie solvable if and only if $E$ is a disjoint union of isolated vertices and graphs of type $\mathbf{C}_J$.
				\item If ${\rm char}(K) \ne 2$, then $L_K(E)$ is Lie solvable if and only if $E$ is a disjoint union of isolated vertices.
			\end{enumerate}
		\end{lemma}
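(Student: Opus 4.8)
The plan is to obtain Lemma~\ref{lemma_4.6} as the acyclic case of the general classification in \cite[Theorem~2.3]{Pa_nam-zhang-22}, which determines all pairs $(E,K)$ for which $L_K(E)$ is Lie solvable. That result describes such graphs $E$ connected component by connected component: writing $L_K(E)=\bigoplus_{C}L_K(C)$ over the connected components $C$ of $E$, each $C$ must be one of a short explicit family of graphs, the family being strictly larger when ${\rm char}(K)=2$. Crucially, among the graphs in that family the \emph{acyclic} ones are exactly the isolated vertex (in every characteristic) and, only when ${\rm char}(K)=2$, the clock graphs $\mathbf{C}_J$; every other graph in the family contains a cycle (for instance an isolated loop, or, in characteristic $2$, a clock graph with a loop adjoined at its center). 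Since a graph is acyclic precisely when all of its connected components are, the ``only if'' direction then follows at once: if $L_K(E)$ is Lie solvable and $E$ is acyclic, every component $C$ of $E$ both lies in the Nam--Zhang family and is acyclic, hence is an isolated vertex when ${\rm char}(K)\ne 2$, or an isolated vertex or a clock graph $\mathbf{C}_J$ when ${\rm char}(K)=2$, which is exactly the stated dichotomy.

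For the ``if'' direction I would either invoke \cite[Theorem~2.3]{Pa_nam-zhang-22} again or, for concreteness, check Lie solvability directly. When ${\rm char}(K)\ne 2$ and $E=\bigsqcup_i\{w_i\}$ is a disjoint union of isolated vertices, $L_K(E)\cong\bigoplus_i K$ is commutative, hence Lie solvable. When ${\rm char}(K)=2$ and $E=\bigsqcup_i\{w_i\}\sqcup\bigsqcup_k\mathbf{C}_{J_k}$, it is enough to verify that each $L_K(\mathbf{C}_J)$ is Lie solvable of derived length bounded by a constant independent of $J$, since a direct sum of Lie algebras of uniformly bounded derived length is solvable. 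This reduces to a short computation: the ideal of $L_K(\mathbf{C}_J)$ generated by the sinks $\{v_j\}_{j\in J}$ is isomorphic to $\bigoplus_{j\in J}\mathbb{M}_2(K)$ (each sink $v_j$ is the range of exactly two paths, namely $v_j$ and the spoke $e_j$), and the corresponding quotient is spanned by the image of the source $u$ and hence abelian; moreover, in characteristic $2$ the Lie algebra $\mathbb{M}_2(K)$ has derived series $\mathbb{M}_2(K)\supset[\mathbb{M}_2(K),\mathbb{M}_2(K)]\supset KI_2\supset 0$, the trace-zero matrices forming a three-dimensional nilpotent Lie algebra with center $KI_2$, so $\mathbb{M}_2(K)$ has derived length $3$ and $L_K(\mathbf{C}_J)$ derived length at most $4$. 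Thus every summand of $L_K(E)$ is Lie solvable of derived length at most $4$, and so is $L_K(E)$ itself.

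The one point requiring genuine care is precisely this bookkeeping over an arbitrary index set: an infinite disjoint union of Lie solvable algebras need not be solvable, so the argument must exhibit a uniform bound on the derived lengths, and producing such a bound uses the feature peculiar to characteristic $2$ that $[\mathbb{M}_2(K),\mathbb{M}_2(K)]$ is nilpotent. Apart from that, the proof is a matching exercise: I am using only that acyclicity deletes from the Nam--Zhang family every graph carrying a cycle, which leaves the isolated vertex together with the clock graphs $\mathbf{C}_J$, and the latter only when ${\rm char}(K)=2$. If \cite[Theorem~2.3]{Pa_nam-zhang-22} is instead formulated as a list of graph-theoretic conditions rather than as a component decomposition, one simply conjoins those conditions with ``$E$ contains no closed path'' and reads off the same two cases.
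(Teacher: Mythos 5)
Your proposal is correct and follows essentially the same route as the paper, which records this lemma without further argument as the specialization of the Nam--Zhang classification \cite[Theorem 2.3]{Pa_nam-zhang-22} to acyclic graphs. Your added direct verification of the ``if'' direction --- in particular the observation that an infinite direct sum requires a uniform bound on derived lengths, supplied here by the characteristic-$2$ computation showing $\mathbb{M}_2(K)$ has derived length $3$ --- is accurate and fills in detail the paper omits.
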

		
		\begin{proposition}\label{proposition_4.7}
			Let $E$ be an acyclic graph and $K$ a field. If $L_K(E)$ contains a non-zero Lie solvable ideal, then $E$ contains a sink.
		\end{proposition}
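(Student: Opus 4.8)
The plan is to produce an ideal of $L_K(E)$ contained in $I$ which is the Leavitt path algebra of an \emph{acyclic} graph, apply Lemma \ref{lemma_4.6} to that graph, and then read a sink of $E$ off of the resulting structure.

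First I would observe that an acyclic graph vacuously satisfies Condition (L), so by the Cuntz--Krieger Uniqueness Theorem (see \cite{Bo_abrams-ara-molina-LPA}) every nonzero ideal of $L_K(E)$ contains a vertex; applying this to $I$ yields a vertex $v\in I$. Let $T(v)$ be the set of vertices reachable from $v$ by a path in $E$ and let $H=\overline{T(v)}$ be its hereditary saturated closure. A routine verification shows $H\subseteq I$: for $w\in T(v)$ one has $w=\gamma^{*}(v\gamma)\in I$ for any path $\gamma$ from $v$ to $w$, and the saturation step $u=\sum_{s(e)=u}ee^{*}$ stays in $I$ once all the $r(e)$ already lie in $I$. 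Hence $I(H)\subseteq I$, and since $I^{-}$ is solvable and $I(H)^{-}$ is a Lie subalgebra of it, $I(H)$ is Lie solvable; moreover $I(H)\neq 0$ because $v\in H$.

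Next I would invoke the porcupine-graph realization of $I(H)$. By \cite[Theorem 3.3]{Pa_vas-22} (cf. the construction recalled in the proof of Theorem \ref{theorem_3.8}) there is an isomorphism $I(H)\cong L_K(P_H)$, where $(P_H)^0=H\sqcup\{w^{e}\mid e\in E^1,\ s(e)\notin H,\ r(e)\in H\}$ and $(P_H)^1=\{e\in E^1\mid s(e)\in H\}\sqcup\{f^{e}\}$ with $\mathbf{s}(f^{e})=w^{e}$ and $\mathbf{r}(f^{e})=r(e)$. Because $H$ is hereditary, every edge of $P_H$ has range in $H$, so each $w^{e}$ is a source; hence any cycle of $P_H$ would be supported on the subgraph of $E$ induced by $H$, which is acyclic, and therefore $P_H$ is acyclic. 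Now $L_K(P_H)\cong I(H)$ is Lie solvable, so Lemma \ref{lemma_4.6} forces $P_H$ to be a disjoint union of isolated vertices and, when ${\rm char}(K)=2$, graphs of type $\mathbf{C}_J$. Every connected component of such a graph has a sink — an isolated vertex is itself a sink, and in $\mathbf{C}_J$ each vertex of $r(s^{-1}(u))$ is a sink — and $(P_H)^0\neq\varnothing$ since $v\in H$; so $P_H$ has a sink $w$.

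Finally I would transfer $w$ back to $E$. Each added vertex $w^{e}$ emits the edge $f^{e}$, so the sink $w$ cannot be of this form; thus $w\in H\subseteq E^0$. Since $w\in H$ and $H$ is hereditary, every edge of $E$ with source $w$ has range in $H$ and is therefore an edge of $P_H$; as $w$ emits no edge of $P_H$, it emits no edge of $E$, i.e. $w$ is a sink of $E$. The only steps needing care are checking that $P_H$ is acyclic and the bookkeeping of which vertices of $P_H$ can be sinks; once Lemma \ref{lemma_4.6} is in hand the rest is purely combinatorial, so I do not expect a serious obstacle.
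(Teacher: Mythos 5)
Your proof is correct and follows essentially the same route as the paper: realize a nonzero Lie solvable graded ideal as the Leavitt path algebra of a porcupine graph, apply Lemma \ref{lemma_4.6}, and transfer the resulting sink back to $E$. The only (harmless) difference is that the paper uses the fact that every ideal of $L_K(E)$ is graded when $E$ is acyclic, so that $I=I(H,S)\cong L_K(P_{(H,S)})$ directly, whereas you pass to the sub-ideal $I(H)$ built from the hereditary saturated closure of the tree of a vertex supplied by the Cuntz--Krieger Uniqueness Theorem.
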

		\begin{proof}
			Let $I$ be a non-zero Lie solvable ideal of $L_K(E)$. Put $H=E^0\cap I$ and $S=\{v\in B_H : v^H\in I\}$. Because $E$ contains no cycle, the ideal $I$ is necessarily graded. It follows from \cite[Theorem 3.3]{Pa_vas-22} that $I$ is graded isomorphic as $K$-algebra to $L_K(P_{(H,S)})$, where $P_{(H,S)}$ is the porcupine graph of the admissible pair $(H,S)$. Because $I$ is Lie solvable, it follows that $L_K(P_{(H,S)})$ is Lie solvable. Therefore, $P_{(H,S)}$ must satisfy all requirements of  Lemma \ref{lemma_4.6}. In particular, we conclude that $P_{(H,S)}$ contains a sink. Because of the constructions of $P_{(H,S)}$, every sink in $P_{(H,S)}$ is also a sink in $E$. It follows that $E$ also contains a sink.
		\end{proof}
		
		\begin{theorem}\label{theorem_4.8}
			Let $E$ be an acyclic graph and $K$ a field. If $L_K(E)$ is locally Lie solvable, then it is Lie solvable.
		\end{theorem}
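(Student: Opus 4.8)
The plan is to reduce to finite subgraphs via Lemma~\ref{lemma_4.3}, feed each finite piece into the classification of Lie solvable Leavitt path algebras of acyclic graphs (Lemma~\ref{lemma_4.6}), patch the resulting local data into a structural statement about $E$, and finally apply Lemma~\ref{lemma_4.6} once more to $E$ itself. Since $E$ is acyclic, Proposition~\ref{proposition_4.4} makes $L_K(E)$ a locally finite $K$-algebra, so $L_K(E)^-$ is a locally finite Lie algebra and the hypothesis says precisely that every finite-dimensional Lie subalgebra of $L_K(E)^-$ is solvable. I would start from an arbitrary finite subgraph $F$ of $E$: by Lemma~\ref{lemma_4.3} there is a finite graph $\dot F\supseteq F$ and an embedding $L_K(\dot F)\hookrightarrow L_K(E)$, and since $E$ is acyclic so is $\dot F$ (\cite[Proposition~1.5.21]{Bo_abrams-ara-molina-LPA}), whence $L_K(\dot F)$ is finite-dimensional (\cite[Theorem~2.6.17]{Bo_abrams-ara-molina-LPA}). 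Thus $L_K(\dot F)^-$ is isomorphic to a finite-dimensional Lie subalgebra of $L_K(E)^-$, hence solvable; that is, $L_K(\dot F)$ is Lie solvable. Lemma~\ref{lemma_4.6} then forces $\dot F$ to have no edges when ${\rm char}(K)\ne 2$, and to be a disjoint union of isolated vertices and graphs of type $\mathbf{C}_J$ when ${\rm char}(K)=2$.

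The case ${\rm char}(K)\ne 2$ is then immediate: if $E$ had an edge $e$, the one-edge subgraph $F$ on $\{s(e),r(e)\}$ would produce a $\dot F\supseteq F$ containing an edge, a contradiction, so $E^1=\varnothing$ and $L_K(E)=\bigoplus_{v\in E^0}Kv$ is commutative, hence Lie solvable. For ${\rm char}(K)=2$ the heart of the matter is upgrading ``every $\dot F$ is a disjoint union of isolated vertices and clock graphs'' to the same statement about $E$. I would call a graph \emph{good} if it contains none of: (i) a path of length two (edges $e,f$ with $r(e)=s(f)$); (ii) two distinct non-sink vertices in one connected component; (iii) two distinct edges with a common source and a common range. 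One checks easily that a disjoint union of isolated vertices and clock graphs is good, and that any subgraph of a good graph is good (edges and undirected walks persist in a larger graph, while in a clock graph each component has at most one non-sink, no length-two path, and no parallel edges). Hence every $\dot F$ above is good, so every finite subgraph $F$ of $E$ is good; and since any occurrence of (i), (ii) or (iii) in $E$ already lives in a suitable finite subgraph — for (i) and (iii) the obvious one- or two-edge subgraph, for (ii) a finite undirected path joining the two vertices together with one out-edge at each — the graph $E$ is itself good.

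It then remains to determine the good acyclic graphs, which is a short combinatorial exercise. For a connected component $C$ of $E$: if $C$ is edgeless it is a single isolated vertex; if $C$ has an edge, then absence of (ii) gives a unique non-sink vertex $u$, every edge of $C$ has source $u$, its range is a sink by absence of (i), and these ranges are pairwise distinct by absence of (iii), so connectedness identifies $C$ with the clock graph $\mathbf{C}_{s^{-1}(u)}$. Therefore $E$ is a disjoint union of isolated vertices and clock graphs, and Lemma~\ref{lemma_4.6}(1) yields that $L_K(E)$ is Lie solvable.

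The step I expect to be the main obstacle is configuration (ii): one must make sure that enlarging the chosen finite subgraph $F$ to $\dot F$ — which in Lemma~\ref{lemma_4.3} adjoins new vertices and edges — does not destroy the fact that the two witnessing vertices remain distinct non-sinks of one connected component of $\dot F$. This rests on the construction of $\dot F$ keeping $F$ as an actual subgraph, so that the out-edges at the two vertices and the undirected path joining them all survive, together with the obvious point that passing to a larger graph can only merge connected components. A secondary subtlety worth flagging is that the clock graph $\mathbf{C}_J$ forbids parallel edges, which is exactly why (iii) is a genuine obstruction and not a harmless feature.
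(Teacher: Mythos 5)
Your proposal is correct and follows essentially the same route as the paper: reduce to the finite complete subgraphs $\dot F$ via Lemma~\ref{lemma_4.3}, apply Lemma~\ref{lemma_4.6} to each to see it is a disjoint union of isolated vertices and (in characteristic $2$) clock graphs, pass from this local information to the same statement about $E$, and apply Lemma~\ref{lemma_4.6} once more. The only difference is that you spell out the local-to-global patching step (via your ``good graph'' configurations), which the paper asserts in one sentence; your elaboration is a correct filling-in of that step rather than a different argument.
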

		\begin{proof}
			Assume that $L_K(E)$ is locally Lie solvable. Let $F$ be an arbitrary finite subgraph of $E$. In view of Lemma \ref{lemma_4.3}, it follows that $F$ is contained in the finite graph $\dot{F}$ such that there exists an injective $K$-algebra homomorphism $L_K(\dot{F}) \to L_K(E)$. As $\dot{F}$ is finite and acyclic, by \cite[Proposition 1.5.21]{Bo_abrams-ara-molina-LPA}, we get that $L_K(\dot{F})$ is finite-dimensional over $K$. Because $L_K(\dot{F})$ is embedded in $L_K(E)$, we obtain that $L_K(\dot{F})$ is Lie solvable. Thus, by Lemma \ref{lemma_4.6}, the graph $\dot{F}$ , and hence $F$, is a disjoint union of a finite number of isolated vertices and, if ${\rm char}(K)=2$, graphs of type $\mathbf{C}_J$ with finite $|J|$. It follows that $E$ is a disjoint union of isolated vertices and, if ${\rm char}(K)=2$, graphs of type $\mathbf{C}_J$. Again, by Lemma \ref{lemma_4.6}, we conclude that $L_K(E)$ is Lie solvable. The theorem is proved.
		\end{proof}
		As a consequence, we have the following corollary whose proof follows immediately from \cite[Theorem 3.3]{Pa_vas-22} and Theorem \ref{theorem_4.8}.
		\begin{corollary}\label{corollary 4.9}
			Let $E$ be an acyclic graph and $K$ a field. Let $I$ be a graded ideal of $L_K(E)$. If I is locally Lie solvable, then it is Lie solvable.
		\end{corollary}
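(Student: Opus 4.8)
The plan is to reduce the statement to Theorem~\ref{theorem_4.8} by passing through the porcupine description of graded ideals. First I would write $H=E^0\cap I$ and $S=\{v\in B_H : v^H\in I\}$; then $(H,S)$ is an admissible pair and $I=I(H,S)$, since the graded ideals of $L_K(E)$ are precisely the ideals of this form. By \cite[Theorem 3.3]{Pa_vas-22}, there is a graded $K$-algebra isomorphism $I\cong L_K(P_{(H,S)})$, where $P_{(H,S)}$ denotes the porcupine graph of the admissible pair $(H,S)$.

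The next step is to observe that $P_{(H,S)}$ is acyclic. This is immediate from the construction of the porcupine graph: any closed path in $P_{(H,S)}$ restricts, after deleting the newly adjoined vertices and their edges, to a closed path in $E$ (this is in the same spirit as the fact, already used in the proof of Proposition~\ref{proposition_4.7}, that every sink of $P_{(H,S)}$ is a sink of $E$). Since $E$ is acyclic, $P_{(H,S)}$ therefore has no closed path and is acyclic.

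Finally, since a $K$-algebra isomorphism transports the associated Lie algebra structure, the hypothesis that $I$ is locally Lie solvable implies that $L_K(P_{(H,S)})$ is locally Lie solvable. Applying Theorem~\ref{theorem_4.8} to the acyclic graph $P_{(H,S)}$, we conclude that $L_K(P_{(H,S)})$ is Lie solvable, and hence so is $I$.

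There is essentially no serious obstacle in this argument; the only two points deserving a word of justification are the transfer of acyclicity to the porcupine graph, which follows directly from its construction, and the transfer of local Lie solvability across the graded isomorphism, which is purely formal.
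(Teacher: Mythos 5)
Your argument is correct and is exactly the route the paper intends: the paper's one-line proof invokes precisely the graded isomorphism $I\cong L_K(P_{(H,S)})$ from \cite[Theorem 3.3]{Pa_vas-22} together with Theorem \ref{theorem_4.8}, and you have merely filled in the (correct) details that the porcupine graph inherits acyclicity from $E$ and that local Lie solvability transfers across the isomorphism.
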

		We now have all the preparations in place to obtain the main result of this section which gives a description of the structure of the Lie algebra $L_K(E)^-$ associated to a Leavitt path algebra $L_K(E)$ in which every Lie ideal is an ideal.
		\begin{theorem}
			Let $E$ be a graph and $K$ a field. Assume that $L_K(E)$ has the property that every Lie ideal is an ideal. Then ${\rm char}(K)=p>0$ and $E$ satisfies the condition $(1)$, $(2)$ or $(3)$ of Theorem \ref{theorem_3.8}, and $L_K(E)^-$ is a locally finite, infinite-dimensional Lie algebra over $K$. Moreover, the following assertions hold.
			\begin{enumerate}[font=\normalfont]
				\item[(i)] If $E$ satisfies $(1)$ or $(3)$, then the locally solvable radical of $L_K(E)^-$ is $0$.
				\item[(ii)] If $E$ satisfies $(2)$, then the locally solvable radical of $L_K(E)^-$ is isomorphic to $K$.
			\end{enumerate}
		\end{theorem}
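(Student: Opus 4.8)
The plan is to extract the structural dichotomy from Theorem~\ref{theorem_3.8} and then compute the radical type by type. Since $L_K(E)$ has the stated property (and may be taken non-commutative), Theorem~\ref{theorem_3.8} immediately gives ${\rm char}(K)=p>0$, places $E$ in one of the types (1), (2), (3), and in particular shows $E$ is acyclic; local finiteness of $L_K(E)^-$ is then precisely Corollary~\ref{corollary_4.5}. For infinite-dimensionality I would observe that in each case $L_K(E)$ contains a copy of $L_K(G)$ for a $p$-commutator graph $G$ with non-empty vertex set --- take $G=E$ in case (1), $G=F$ in case (2) since $L_K(E)=Kv\oplus L_K(F)$, and $G=P_H$ in case (3) since $I(H)\cong L_K(P_H)$ --- and that a non-empty $p$-commutator graph has infinitely many vertices, because a finite directed graph in which every vertex is regular must contain a cycle, contradicting acyclicity. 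As the vertices of $G$ form an infinite family of pairwise orthogonal non-zero idempotents, $L_K(G)$, hence $L_K(E)$ and $L_K(E)^-$, is infinite-dimensional over $K$.

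For the radical computations, write $R$ for the locally solvable radical of $L_K(E)^-$. The mechanism I would use repeatedly is: $R$ is a Lie ideal of $L_K(E)$, hence by hypothesis a ring-theoretic ideal; since $E$ is acyclic, every ideal of $L_K(E)$ is graded, so $R$ is graded; and $R$ is locally Lie solvable by definition, so Corollary~\ref{corollary 4.9} upgrades this to ``$R$ is Lie solvable''. Consequently, if $R\neq 0$, then Proposition~\ref{proposition_4.7} forces $E$ to contain a sink. In case (1), $E$ is a $p$-commutator graph, so all its vertices are regular and $E$ has no sink, whence $R=0$. In case (3), $E^0=H\sqcup\{v\}$ with $v$ an infinite emitter (not a sink), and every $h\in H$ is regular in $E$, because the edges of $P_H$ issuing from $h$ coincide with those of $E$ issuing from $h$ and the $p$-commutator graph $P_H$ has only regular vertices; so again $E$ has no sink and $R=0$. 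This settles (i).

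For (ii) we have $E=F\sqcup\{v\}$ with $F$ a $p$-commutator graph, $L_K(E)=Kv\oplus L_K(F)$, and $v$ a central idempotent. One inclusion is easy: $Kv$ is a ring ideal, hence a Lie ideal, and abelian as a Lie algebra, hence locally solvable, so $Kv\subseteq R$. For the reverse inclusion I would use that, since $R$ is a ring ideal and $v$ is central with $vL_K(E)=Kv$, it splits as $R=(R\cap Kv)\oplus(R\cap L_K(F))$; the second summand is a graded (as $F$ is acyclic) ideal of $L_K(F)$ whose associated Lie algebra is a subalgebra of the locally solvable $R^-$, hence locally Lie solvable, hence Lie solvable by Corollary~\ref{corollary 4.9}. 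Were it non-zero, Proposition~\ref{proposition_4.7} would produce a sink in the $p$-commutator graph $F$, a contradiction; so $R\cap L_K(F)=0$ and $R=Kv\cong K$.

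The routine parts --- invoking Theorem~\ref{theorem_3.8} and Corollary~\ref{corollary_4.5}, and counting idempotents --- are painless. The one place demanding care is the translation in the last two paragraphs: showing that the locally solvable radical, a priori only a Lie object, is genuinely a \emph{graded ring-theoretic} ideal so that Corollary~\ref{corollary 4.9} and Proposition~\ref{proposition_4.7} become applicable, together with the splitting of $R$ along the central idempotent $v$ in case (ii). I expect this bookkeeping to be the crux; everything else is a direct appeal to results already in hand.
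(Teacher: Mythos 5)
Your proposal is correct and follows essentially the same route as the paper: invoke Theorem \ref{theorem_3.8} and Corollary \ref{corollary_4.5} for the structural statements, then use the hypothesis to turn the locally solvable radical into a graded ring-theoretic ideal so that Corollary \ref{corollary 4.9} and Proposition \ref{proposition_4.7} produce a sink, contradicting the regularity of all vertices of a $p$-commutator graph. You in fact supply details the paper leaves implicit --- the counting argument for infinite-dimensionality via infinitely many orthogonal vertex idempotents, and the splitting of $R$ along the central idempotent in case (ii) --- and these additions are sound.
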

		\begin{proof}
			The fact that $E$ satisfies (1), (2) or (3) of Theorem \ref{theorem_3.8} follows immediately from the same theorem. Then, $E$ is an acyclic graph, and thus, by Corollary \ref{corollary_4.5}, we get that $L_K(E)^-$ is a locally finite Lie algebra over $K$. To prove (i), we suppose that $E$ satisfies (1) or (3) of Theorem \ref{theorem_3.8}. Let $I$ be a non-zero locally solvable Lie ideal of $L_K(E)$. As every Lie ideal of $L_K(E)$ is also an ideal, in view of \cite[Theorem 3.3.11]{Bo_abrams-ara-molina-LPA}, we obtain that $I$ is a graded ideal. Moreover, by Corollary \ref{corollary 4.9}, it is Lie solvable. As $E$ is acyclic, in view of Proposition \ref{proposition_4.7}, it follows that $E$ contains at least a sink, a contradiction. This reveals that $L_K(E)$ contains no non-zero locally solvable ideals, and thus (i) holds. Now, assume that $E$ satisfies (2) of Theorem \ref{theorem_3.8}. Then, by the same argument, it is easy to see that $Kw$ is the unique locally solvable Lie ideal of $L_K(E)$. It follows that the locally solvable radical of $L_K(E)^-$ is $Kw$ which is isomorphic to $K$. The assertion (ii) is proved. The proof of our theorem is now complete.
		\end{proof}
		The following corollary follows immediately, providing two classes of infinite-dimensional Lie algebras which are semisimple.
		\begin{corollary}\label{corollary_4.11}
			Let $K$ be a field with ${\rm char}(K)=p>0$. If $E$ is a graph satisfying the condition $(1)$ or $(3)$ of Theorem \ref{theorem_3.8}, then $L_K(E)^-$ is a locally finite, infinite-dimensional, semisimple Lie algebra over $K$.
		\end{corollary}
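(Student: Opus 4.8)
The plan is to read the statement off from the structure theorem established immediately above. That theorem already shows that, whenever $L_K(E)$ has the property that every Lie ideal is an ideal — in particular whenever ${\rm char}(K)=p>0$ and $E$ satisfies condition $(1)$ or $(3)$ of Theorem~\ref{theorem_3.8} (so that, tacitly, $L_K(E)$ is non-commutative) — the Lie algebra $L_K(E)^-$ is locally finite and infinite-dimensional over $K$; and its assertion (i) records that in cases $(1)$ and $(3)$ the locally solvable radical of $L_K(E)^-$ is zero. Since a locally finite Lie algebra is by definition semisimple exactly when its locally solvable radical vanishes, all three asserted properties follow at once. The deduction is thus essentially a single line; what follows are the ingredients one would invoke for a self-contained argument.

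First, for local finiteness and infinite dimensionality: since a $p$-commutator graph is acyclic and has only regular vertices, and a non-empty finite acyclic graph always possesses a sink, a non-empty $p$-commutator graph must be infinite. In case $(1)$ this applies to $E$ directly, so $E^0$ is infinite; in case $(3)$, the subgraph $E_H$ on $H$ is a $p$-commutator graph (as observed in the proof of Theorem~\ref{theorem_3.8}), whence $H$, and therefore $E^0$, is infinite. In either case $E^0$ is an infinite set of pairwise orthogonal idempotents of $L_K(E)$, so $\dim_K L_K(E)=\infty$; and since $E$ is acyclic, Corollary~\ref{corollary_4.5} (equivalently Proposition~\ref{proposition_4.4}) gives that $L_K(E)$, hence $L_K(E)^-$, is locally finite.

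Second, for semisimplicity one reruns the argument behind assertion (i) of the preceding theorem: suppose $I$ is a non-zero locally solvable Lie ideal of $L_K(E)$. As every Lie ideal is an ideal and $E$ is acyclic, $I$ is a graded ideal by \cite[Theorem 3.3.11]{Bo_abrams-ara-molina-LPA}; it is then Lie solvable by Corollary~\ref{corollary 4.9}; and Proposition~\ref{proposition_4.7} would then force $E$ to contain a sink. But $E$ has no sink — in case $(1)$ every vertex is regular, while in case $(3)$ the only vertex outside $H$ is the infinite emitter $v$ and every vertex of $H$ is regular in $E$ — a contradiction. Hence $L_K(E)^-$ has no non-zero locally solvable Lie ideal, its locally solvable radical is $0$, and $L_K(E)^-$ is semisimple.

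I do not anticipate any genuine obstacle, as the corollary merely repackages the preceding theorem; the only point worth a second look is that $p$-commutator graphs are truly infinite — so that the resulting semisimple Lie algebras are genuinely infinite-dimensional rather than vacuous — and this reduces to the elementary fact that a non-empty finite acyclic graph has a sink.
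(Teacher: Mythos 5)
Your proposal is correct and takes essentially the same route as the paper, which derives the corollary immediately from the preceding structure theorem (locally finite and infinite-dimensional from acyclicity, semisimple because assertion (i) gives zero locally solvable radical in cases $(1)$ and $(3)$). Your added check that a non-empty $p$-commutator graph is infinite --- via the fact that a finite acyclic graph has a sink while such graphs have only regular vertices --- is a worthwhile supplement, since the paper asserts infinite-dimensionality in the preceding theorem without explicitly verifying it.
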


		\begin{remark}
			A Lie algebra $L$ over a field is called \textit{simple} if it has no ideals other than $0$ and $L$ and is not abelian. It is well-known from Lie theory that every finite-dimensional semisimple Lie algebra over a field of characteristic zero is a direct sum of simple Lie algebras. At the other extreme, in Example \ref{exmaple_4}, because the graph $E$ satisfies the condition (2) of Theorem \ref{theorem_3.8}, in view of Corollary \ref{corollary_4.11}, we get that the Lie algebra $L_K(E)^-$ of $L_K(E)$ obtained in Example \ref{exmaple_4} is an infinite-dimensional semisimple Lie algebra. Also in the same example, we have proved that $L_K(E)^-$ contains a non-trivial Lie ideal, which shows that the Lie algebra $L_K(E)^-$ is not simple. Therefore, we may use Corollary \ref{corollary_4.11} to construct a lot of examples of semisimple Lie algebras which are not direct sums of simple Lie algebras.  
		\end{remark}
	
	{\noindent\textbf{Acknowledgements} }This paper was carried out when the author was working as a postdoctoral researcher at the Vietnam Institute for Advanced Study in Mathematics (VIASM). He would like to express his warmest thanks to the VIASM for fruitful research environment and hospitality. The author would like to thank anonymous referees for carefully reading the manuscript and providing excellent suggestions for improvement. \\
	
	{\noindent\textbf{Funding} }This research is funded by the Vietnam Ministry of Education and Training under grant number B2024-CTT-02.
	
	\bigskip 
	
	{\noindent\textbf{Conflict of Interest.} }The authors have no conflict of interest to declare that are relevant to this article.
	
\end{document}